\newenvironment{claimproof}{\par
  \pushQED{\hfill$\diamondsuit$}%
  \normalfont \topsep6\p@\@plus6\p@\relax
  \trivlist
  \item[\hskip\labelsep
    \normalfont\itshape 
    Proof.\hspace{-0.1cm}
    ]\ignorespaces
}{%
  \popQED\endtrivlist\@endpefalse
}
\newcommand{\N}{{\ensuremath{\mathbb{N}}}}
\newcommand{\Z}{{\ensuremath{\mathbb{Z}}}}
\newcommand{\C}{{\ensuremath{\mathbb{C}}}}
\newcommand{\stkout}[1]{\ifmmode\text{\sout{\ensuremath{#1}}}\else\sout{#1}\fi}
\DeclareMathOperator{\GL}{GL}
\DeclareMathOperator{\spc}{sp}
\DeclareMathOperator{\supp}{supp}
\DeclareMathOperator{\Ad}{Ad}
\DeclareMathOperator{\sing}{Sing}
\DeclareMathOperator{\diag}{diag}
\DeclareMathOperator{\G}{G}
\DeclareMathOperator{\U}{U}
\DeclareMathOperator{\SL}{SL}
\DeclareMathOperator{\Tr}{Tr}
\newcommand\bC{{\mathbb C}}
\newcommand\bZ{{\mathbb Z}}
\newcommand\cN{{\mathcal N}}
\newcommand{\Sph}{\ensuremath{\mathbb{S}}}
\newcommand{\ca}[1]{\ensuremath{\mathcal{#1}}}
\newcommand{\abs}[1]{\ensuremath{ {\left| #1 \right|} }}
\newtheorem{proposition}{Proposition}[section]
\newtheorem{lemma}[proposition]{Lemma}
\newtheorem{theorem}[proposition]{Theorem}
\newtheorem{theoremrec}{Theorem}
\newtheorem{claim}{Claim}
\newtheorem{corollary}[proposition]{Corollary}
\theoremstyle{definition}
\newtheorem{case}{Case}
\newtheorem{remark}[proposition]{Remark}
\numberwithin{equation}{section}
\newlength{\leftstackrelawd}
\newlength{\leftstackrelbwd}
\def\leftstackrel#1#2{\settowidth{\leftstackrelawd}%
{${{}^{#1}}$}\settowidth{\leftstackrelbwd}{$#2$}%
\addtolength{\leftstackrelawd}{-\leftstackrelbwd}%
\leavevmode\ifthenelse{\lengthtest{\leftstackrelawd>0pt}}%
{\kern-.5\leftstackrelawd}{}\mathrel{\mathop{#2}\limits^{#1}}}
\begin{document}

\title[A variant of \v{S}emrl's preserver theorem for singular matrices]{A variant of \v{S}emrl's preserver theorem for singular matrices}

\author{Alexandru Chirvasitu, Ilja Gogi\'{c}, Mateo Toma\v{s}evi\'{c}}

\address{A.~Chirvasitu, Department of Mathematics, University at Buffalo, Buffalo, NY 14260-2900, USA}
\email{achirvas@buffalo.edu}

\address{I.~Gogi\'c, Department of Mathematics, Faculty of Science, University of Zagreb, Bijeni\v{c}ka 30, 10000 Zagreb, Croatia}
\email{ilja@math.hr}

\address{M.~Toma\v{s}evi\'c, Department of Mathematics, Faculty of Science, University of Zagreb, Bijeni\v{c}ka 30, 10000 Zagreb, Croatia}
\email{mateo.tomasevic@math.hr}

\keywords{singular matrix; rank; spectrum shrinker; spectrum preserver; commutativity preserver; Jordan homomorphism}

\subjclass[2020]{47A10; 47B49; 15A27; 57N35}

\date{\today}

\begin{abstract}
  For positive integers $1 \leq k \leq n$ let $M_n$ be the algebra of all $n \times n$ complex matrices and $M_n^{\le k}$ its subset consisting of all matrices of rank at most $k$.  We first show that whenever $k>\frac{n}{2}$, any continuous spectrum-shrinking map $\phi : M_n^{\le k} \to M_n$ (i.e. $\mathrm{sp}(\phi(X)) \subseteq \mathrm{sp}(X)$ for all $X \in M_n^{\le k}$) either preserves characteristic polynomials or takes only nilpotent values. Moreover, for any $k$ there exists a real analytic embedding of $M_n^{\le k}$ into the space of $n\times n$ nilpotent matrices for all sufficiently large $n$. This phenomenon cannot occur when $\phi$ is injective and either $k > n - \sqrt{n}$ or the image of $\phi$ is contained in $M_n^{\le k}$. We then establish a main result of the paper -- a variant of \v{S}emrl's preserver theorem for $M_n^{\le k}$: if $n \geq 3$, any injective continuous map $\phi :M_n^{\le k} \to M_n^{\le k}$ that preserves commutativity and shrinks spectrum is of the form $\phi(\cdot)=T(\cdot)T^{-1}$ or $\phi(\cdot)=T(\cdot)^tT^{-1}$, for some invertible matrix $T\in M_n$. Moreover, when $k=n-1$, which corresponds to the set of singular $n\times n$ matrices, this result extends to maps $\phi$ which take values in $M_n$. Finally, we discuss the indispensability of assumptions in our main result.
\end{abstract}

\maketitle

\section{Introduction}

This is a follow-up on prior work ultimately motivated by the problem of characterizing \emph{Jordan morphisms} \cite[\S I.1]{jac_jord} between matrix algebras or more broadly, operator algebras: linear maps satisfying
\begin{equation*}
  \phi(XY+YX)
  =
  \phi(X)\phi(Y)+\phi(Y)\phi(X)
  ,\quad
 \text{ for all } X,Y.
\end{equation*}
We remind the reader (e.g. \cite{Herstein,Semrl2}) that such non-zero self-maps on the $n\times n$ complex matrix algebra $M_n$ are classified as precisely those of the form
\begin{equation}\label{eq:inner}
  \phi(\cdot) = T(\cdot)T^{-1} \qquad \text{or}   \qquad \phi(\cdot) = T(\cdot)^{t}T^{-1},
\end{equation}
for some invertible matrix $T \in M_n$.

The vast literature on Jordan morphisms between operator algebras has branched out in numerous directions and spawned many variations on the theme of recognizing, characterizing and classifying such morphisms via various (linear and nonlinear) preservation properties. One notable example of such a result is the theorem alluded to in the present paper's title:
\begin{theoremrec}[\v{S}emrl's preserver theorem {\cite[Theorem~1.1]{Semrl}}]\label{th:Semrl}
  Let $\phi : M_n \to M_n$, $n \ge 3$, be a continuous map that preserves
  \begin{itemize}[wide]
  \item commutativity, in the sense that
    \begin{equation*}
      XY=YX \quad \implies \quad \phi(X)\phi(Y)=\phi(Y)\phi(X), \quad \text{ for all } X,Y \in M_n; 
    \end{equation*}
  \item and also spectra, i.e. $\spc(\phi(X))=\spc(X)$ for all $X \in M_n$.     
  \end{itemize}
  Then, there exists an invertible matrix $T \in M_n$ such that $\phi$ is as in \eqref{eq:inner}.
\end{theoremrec}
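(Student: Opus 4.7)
The plan is to parlay spectrum preservation into setwise invariance of canonical classes of matrices, use commutativity preservation to transfer maximal abelian subalgebras into one another, and then pin down $\phi$ via a projective-rigidity theorem applied to the variety of rank-one idempotents, finally extending by continuity from a dense subset.

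First, spectrum preservation forces $\phi$ to setwise preserve the open dense set $\mathcal{D}\subseteq M_n$ of matrices with $n$ distinct eigenvalues, the set of idempotents (those with spectrum contained in $\{0,1\}$), and more finely the subsets of idempotents of each fixed rank, since rank can be read off intrinsically through maximal pairwise-commuting refinements of idempotents. For any $D\in\mathcal{D}$, the commutant $\{D\}'$ is a maximal abelian subalgebra of $M_n$ of dimension $n$, and the commutativity hypothesis sends $\{D\}'$ into $\{\phi(D)\}'$, which is itself a maximal abelian subalgebra of dimension $n$; in particular, a resolution of the identity by $n$ pairwise commuting rank-one idempotents in $\{D\}'$ is carried to such a resolution in $\{\phi(D)\}'$.

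The crux is a rigidity step for the variety $\mathcal{E}_1$ of rank-one idempotents. Elements of $\mathcal{E}_1$ correspond to pairs $(L,H)$ with $L\oplus H=\mathbb{C}^n$, and distinct $e,f\in\mathcal{E}_1$ commute iff $\mathrm{Im}(e)\subseteq\Ker(f)$ and $\mathrm{Im}(f)\subseteq\Ker(e)$. For $n\ge 3$, a continuous commutativity-preserving bijection of $\mathcal{E}_1$ must arise from $e\mapsto TeT^{-1}$ or $e\mapsto Te^tT^{-1}$ for some invertible $T\in M_n$, by a Dieudonn\'e / fundamental-theorem-of-projective-geometry style result. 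Promoting $\phi\big|_{\mathcal{E}_1}$ from a self-map to an honest bijection is delicate: injectivity comes from using commutativity test elements together with spectrum preservation to rule out collapses, while surjectivity follows from continuity together with an invariance-of-domain argument. Composing $\phi$ with $T^{-1}(\cdot)T$ and possibly with $(\cdot)^t$, we may now assume $\phi$ fixes $\mathcal{E}_1$ pointwise.

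Finally, for $X\in\mathcal{D}$ with spectral decomposition $X=\sum_i \lambda_i e_i$ (with $e_i\in\mathcal{E}_1$ pairwise commuting and summing to $I$), the image $\phi(X)$ lies in $\{e_1,\dots,e_n\}'=\mathbb{C}[X]$; spectrum preservation then identifies $\phi(X)$ with $\sum_i\lambda_{\sigma(X,i)}e_i$ for some permutation $\sigma(X,\cdot)$ of $\{1,\dots,n\}$, and continuity of $\phi$ together with path-connectedness of $\mathcal{D}$ forces $\sigma$ to be globally trivial; hence $\phi=\mathrm{id}$ on $\mathcal{D}$, and density plus continuity give $\phi=\mathrm{id}$ on $M_n$. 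The principal obstacle is clearly the rigidity step for $\mathcal{E}_1$: without linearity or a priori injectivity, one must first promote the commutativity-graph morphism to a genuine bijection of the rank-one idempotent variety before any projective-geometry rigidity machinery can take hold, and this promotion requires careful joint use of spectrum preservation and continuity to exclude degenerate behavior.
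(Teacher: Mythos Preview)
The paper does not prove Theorem~A at all: it is quoted as a background result from \v{S}emrl~\cite{Semrl}, and the paper's only remark on its proof is that \v{S}emrl's argument ``employs relatively sophisticated techniques relying, for instance, on a variant of the Fundamental Theorem of Projective Geometry.'' Your outline is in the same spirit---you also route the problem through a projective-rigidity step on the variety $\mathcal{E}_1$ of rank-one idempotents---so at the level of overall strategy you are aligned with what the paper reports about the cited proof.

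That said, two steps in your sketch are genuinely incomplete. First, the assertion that $\phi$ preserves the rank of idempotents because ``rank can be read off intrinsically through maximal pairwise-commuting refinements'' is not a valid argument: spectrum preservation only gives $\spc(\phi(e))=\{0,1\}$, which does not even force $\phi(e)$ to be idempotent (a nontrivial nilpotent Jordan block has spectrum $\{0\}$), and commutativity with images of other idempotents gives no handle on rank. The clean route is different: on the dense set $\mathcal D$ of matrices with $n$ distinct eigenvalues, spectrum preservation already forces $k_{\phi(X)}=k_X$; this extends to all of $M_n$ by continuity, and then for any idempotent $e$ one reads off $r(\phi(e))=\Tr(\phi(e))=\Tr(e)$, with diagonalizability of $\phi(e)$ coming from commutation with some $\phi(D)$, $D\in\mathcal D$. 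Second, the promotion of $\phi|_{\mathcal E_1}$ to a bijection is not achieved by invariance of domain: $\mathcal E_1$ is a connected \emph{noncompact} manifold, and an open injective continuous self-map need not be surjective. You rightly flag this step as ``the principal obstacle,'' but the proposal does not actually overcome it; \v{S}emrl's argument treats this point with considerably more care than your sketch indicates.
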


The version just cited improves on earlier work \cite{PetekSemrl} and employs relatively sophisticated techniques relying, for instance, on a variant of the \emph{Fundamental Theorem of Projective Geometry} (see e.g.\ \cite[Theorem 3.1]{zbMATH01747827} or \cite[Theorem 2.3]{pank_wign}); this allows for a more concise argument as compared to prior approaches based on elementary calculations. \cite{Semrl} also gives examples demonstrating the indispensability of the various assumptions. For further related results, ramifications and generalizations we direct the reader to \cite{2501.06840v2,GogicPetekTomasevic,MR4881574,Petek-HM,Petek-TM} and the references cited therein.

In the same spirit of generalizing characterization or classification results on and around preserver maps, in \cite{2501.06840v2} we show that in Theorem \ref{th:Semrl} it suffices to assume that $\phi$ is merely spectrum-\emph{shrinking} (meaning that $\spc(\phi(X)) \subseteq \spc(X)$ for all matrices $X \in M_n$) rather than spectrum-preserving. More generally, given a subset $L\subseteq \C^n$, denote by $\Delta_{L}$ the subset of $L$ that consists of elements with at least two equal coordinates. We naturally identify the symmetric group $S_n$ with the $n\times n$ permutation matrices, so that $S_n$ forms a subgroup of the general linear group $\GL(n)$. Assuming that $L$ is invariant under the action of $S_n$ in $\C^n$ (by conjugation), $S_n$ also naturally acts on the set of connected components of $L\setminus \Delta_L$.  We have:
\begin{theorem}[{\cite[Theorem~1.1]{2501.06840v2}}]\label{th:sp.shrk.conn.conf.sp}
  Given $n \in \Z_{\ge 1}$, a closed connected subgroup $\G$ of $\GL(n)$, a linear subspace $V$ of the algebra $T^{+}(n)$ of $n \times n$ strictly upper-triangular matrices, and a subset $L \subseteq \C^n$, denote
  \begin{equation*}
  T_{L,V}:=\{\diag(\lambda_1, \ldots, \lambda_n) + v \ : \ (\lambda_1, \ldots, \lambda_n) \in L, \ v \in V\}
\end{equation*}
and 
\begin{equation*}
 \ca{X}_n:=\Ad_{\G}T_{L,V}=\{SXS^{-1} \, : \, X \in T_{L,V}, \, S \in \G\}.
 \end{equation*}
Assume that: 
  \begin{itemize}[wide]
  \item $L\setminus \Delta_L$ is dense in $L$;
  \item $L$ is invariant under the action of $S_n$ in $\C^n$;
  \item and  the isotropy groups of the connected components of $L\setminus \Delta_L$ in $\G\cap S_n$ are transitive on $\{1,2,\ldots,n\}$.
  \end{itemize} 
  Then for an arbitrary $m\in \bZ_{\ge 1}$ there exists a continuous spectrum shrinker $\phi :   \ca{X}_n \to M_m$
if and only $n$ divides $m$ and in that case we have the equality of characteristic polynomials
  \begin{equation}\label{eq:k.is.pow}
    k_{\phi(X)}
    =
    (k_{X})^{\frac mn}
    ,\quad
   \text{ for all }  X\in \ca{X}_n.
  \end{equation}
\end{theorem}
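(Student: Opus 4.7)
The plan is to handle both directions together. For sufficiency, when $n\mid m$ and $m=kn$, the block-diagonal map $\phi(X):=X\oplus\cdots\oplus X$ ($k$ summands) is continuous, lands in $M_m$, preserves spectrum, and satisfies $k_{\phi(X)}=k_X^k=k_X^{m/n}$. The real work is the converse, starting from an arbitrary continuous spectrum shrinker $\phi\colon\cX_n\to M_m$.

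First, for each connected component $C_\alpha$ of $L\setminus\Delta_L$ and each $X=\diag(\lambda_1,\ldots,\lambda_n)+v\in T_{C_\alpha,V}$, the shrinking hypothesis combined with the fact that the entries $\lambda_i$ are distinct eigenvalues of the upper-triangular matrix $X$ forces $k_{\phi(X)}=\prod_{i=1}^{n}(z-\lambda_i)^{m_i(X)}$ for nonnegative integers summing to $m$. Algebraic multiplicity is upper semicontinuous; since here the multiplicities sum to the constant $m$ on the connected space $T_{C_\alpha,V}\cong C_\alpha\times V$, each $m_i$ must be continuous (upper semicontinuous functions summing to a constant on a connected space are each continuous), and being integer-valued, each is a constant $m_i^{(\alpha)}$.

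The crux is to show $m_j^{(\alpha)}=m_{\pi(j)}^{(\alpha)}$ for every $\pi$ in the $(\G\cap S_n)$-isotropy of $C_\alpha$; transitivity then forces all $m_i^{(\alpha)}$ to equal $m/n$, in particular $n\mid m$. Given $X_0:=\diag(\lambda_1,\ldots,\lambda_n)+v\in T_{C_\alpha,V}$ and its permuted companion $X_1:=\diag(\lambda_{\pi^{-1}(1)},\ldots,\lambda_{\pi^{-1}(n)})+v\in T_{C_\alpha,V}$, I would concatenate three paths in $\cX_n$ from $X_0$ to $X_1$ along which the eigenvalue multiset is constantly $\{\lambda_1,\ldots,\lambda_n\}$: first $g(t)X_0g(t)^{-1}$ for a path $g(t)\in\G$ from $\id$ to $\sigma_\pi$ (available since $\G$ is connected); then $\sigma_\pi(\diag(\lambda_1,\ldots,\lambda_n)+(1-t)v)\sigma_\pi^{-1}$, which terminates at $\diag(\lambda_{\pi^{-1}(1)},\ldots,\lambda_{\pi^{-1}(n)})$ and stays in $\cX_n$ because $V$ is a subspace; and finally $\diag(\lambda_{\pi^{-1}(1)},\ldots,\lambda_{\pi^{-1}(n)})+sv$ for $s\in[0,1]$, staying in $T_{C_\alpha,V}$. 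Applying the multiplicity-constancy argument along this concatenated path gives $k_{\phi(X_0)}=k_{\phi(X_1)}$, and coefficient comparison yields the equivariance $m_j^{(\alpha)}=m_{\pi(j)}^{(\alpha)}$.

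Once $k_{\phi(X)}=k_X^{m/n}$ holds on every $T_{C_\alpha,V}$, density of $L\setminus\Delta_L$ in $L$ and continuity of $\phi$ extend it to $T_{L,V}$; conjugation-invariance of characteristic polynomials together with the density of $\Ad_\G T_{L\setminus\Delta_L,V}$ in $\cX_n$ then extend it to all of $\cX_n$. The principal obstacle is the three-path construction above: one must ensure it stays inside $\cX_n$ with a prescribed distinct eigenvalue set throughout, which hinges on $V$ being a linear subspace (so $(1-t)v,\,sv\in V$) and on $\pi$ stabilizing $C_\alpha$ (so $(\lambda_{\pi^{-1}(1)},\ldots,\lambda_{\pi^{-1}(n)})\in C_\alpha$); everything else is spectral bookkeeping via upper semicontinuity and the integer-valued-on-connected-set principle.
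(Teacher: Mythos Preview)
This statement is quoted from \cite{2501.06840v2} and is not proved in the present paper, so there is no in-paper argument to compare against directly. However, the paper does prove the closely related Theorem~\ref{th:sing}\ref{item:th:sing:p} by precisely the mechanism you describe: one observes that $k_{\phi(\cdot)}$ is constant on each connected $\G$-conjugacy class (constant spectrum along the class plus a Rouch\'e-type count makes each eigenvalue multiplicity of $\phi(\cdot)$ locally constant, hence constant), reduces to diagonal matrices, and then compares $k_{\phi(D)}$ with $k_{\phi(\sigma^{-1}D\sigma)}$ for $\sigma$ in the isotropy of the relevant component to force all multiplicities equal. Your proposal is correct and follows exactly this template.

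Your three-path construction is more elaborate than necessary: since $0\in V$, you may take $v=0$ and note that $X_0=\diag(\lambda_1,\dots,\lambda_n)$ and $X_1=\sigma_\pi X_0\sigma_\pi^{-1}$ already lie in the same connected $\G$-orbit, so only your Path~1 is required (this is how the paper argues for Theorem~\ref{th:sing}\ref{item:th:sing:p}). Two points worth tightening: (i) the phrase ``algebraic multiplicity is upper semicontinuous'' is slightly imprecise because the target eigenvalue $\lambda_i(X)$ moves with $X$; the clean statement is local constancy of the root count in disjoint disks via the argument principle (your conclusion is unaffected). (ii) In your final extension to all of $\cX_n$, ``conjugation-invariance of characteristic polynomials'' alone does not give $k_{\phi(SYS^{-1})}=k_{\phi(Y)}$; you need the $\G$-orbit constancy of $k_{\phi(\cdot)}$ that your Path~1 already establishes, applied on the dense subset $\Ad_{\G}T_{L\setminus\Delta_L,V}$, before invoking continuity to pass to the closure.
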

As established in \cite[Corollary~1.2]{2501.06840v2}, Theorem \ref{th:sp.shrk.conn.conf.sp} applies to a wide array of distinguished matrix subsets $\ca{X}_n$ of $M_n$, including: $M_n$, $\GL(n)$, the special linear group $\SL(n)$, the unitary group $\U(n)$ and the subset $N_n$ of $n \times n$ normal matrices. Additionally, it also holds for the sets of semisimple (i.e.\ diagonalizable) elements in $M_n$, $\GL(n)$ and $\SL(n)$. Moreover, by synthesizing \ \cite[Corollary~1.2]{2501.06840v2} with a range of algebraic, topological, and operator-theoretic techniques, in \cite[Theorem~0.2]{2501.06840v2} we show that for
\begin{equation*}
  \ca{X}_n
  \
  \in
  \ 
  \{\GL(n), \SL(n), \U(n), N_n\}
\end{equation*}
any continuous, commutativity-preserving and spectrum-shrinking map $\phi: \ca{X}_n \to M_n$ is of the form \eqref{eq:inner} for some $T \in \GL(n)$. 

\smallskip

The primary objective of this paper is to establish a variant of Theorem \ref{th:Semrl} and \cite[Theorem~0.2]{2501.06840v2} for a class of maps $\phi$ defined on appropriate subsets of all \emph{singular matrices} $\sing(n)\subset M_n$.  More precisely, if $k \leq n$, let
\begin{equation*}
  M_n^{=k}:=\{A \in M_n \ : \ r(A)= k\}, \quad \mbox{ and } \quad M_n^{\le k}:=\{A \in M_n \ : \ r(A)\leq k\},
\end{equation*}
where $r(A)$ denotes the rank of a matrix $A$. Obviously, $\sing(n)=M_n^{\le n-1}$.

Note that Theorem \ref{th:sp.shrk.conn.conf.sp} does not apply to $M_n^{\le k}$ when $k\le n-1$. Specifically, if $k \le n-2$, the space $L$ of possible $n$-tuples of eigenvalues would always contain at least two zeros, implying that $L\setminus \Delta_L$ are empty, so certainly not dense in $L$. Additionally, for $k=n-1$, the connected components of $L\setminus \Delta_L$ are 
\begin{equation*}
  \left\{(\lambda_1, \ldots , \lambda_n)\in \bC^n\ :\ \lambda_1, \ldots, \lambda_n\text{ pairwise distinct and }\lambda_j=0\right\}
  ,\quad
  1\le j\le n.
\end{equation*}
The respective isotropy groups are the copies of $S_{n-1}\subset S_n$ fixing the $n$ individual symbols, so do not operate transitively. These issues notwithstanding, we prove the following singular variant:

\begin{theorem}\label{th:sing}
  Let $m,n,k \in \Z_{\ge 1}$, $k \leq n$, and let $\phi : M_n^{\le k}\to M_m$ be a continuous spectrum-shrinking map. 
  \begin{enumerate}[(1),wide]
  \item\label{item:th:sing:p} There is some $p\in \bZ_{\ge 0}$ such that
    \begin{equation}\label{eq:somep}
      \text{ for all } X\in M_{n}^{\le k}
      \quad:\quad
      k_{\phi(X)}(x)
      =
      k_X^p(x)\cdot x^{m-pn}.
    \end{equation}
  \item\label{item:th:sing:p01} If $2k > m$ then $\phi$ either takes nilpotent values or
    \begin{equation}\label{eq:samepoly+leftover0}
    \text{ for all } X\in M_{n}^{\le k}
      \quad:\quad
      k_{\phi(X)}(x)
      =
      k_X(x)\cdot x^{m-n}.
    \end{equation}
    In particular, if $m=n$, then $\phi$ either takes only nilpotent values or preserves characteristic polynomials.

  \item\label{item:th:sing:inj} If in addition to the preceding point's conditions we also have $k(2n-k)> m^2-m$ and $\phi$ is injective, then \eqref{eq:samepoly+leftover0} holds.
    
  \item\label{item:th:sing:inj.bis} If $m=n$, $\phi$ is injective, and $k > n - \sqrt{n}$ or $\phi(M_n^{\le k}) \subseteq M_n^{\le k}$, then $\phi$ preserves characteristic polynomials (and hence also spectra).
  \end{enumerate}
\end{theorem}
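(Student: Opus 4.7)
The plan is to establish part~\ref{item:th:sing:p} first and derive the remaining three parts as corollaries.

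For part~\ref{item:th:sing:p}, consider the open dense subset $\cU \subseteq M_n^{\le k}$ of matrices diagonalizable with rank exactly $k$ and $k$ distinct nonzero eigenvalues; these are precisely the $\GL(n)$-conjugates of the diagonal matrices $D_\lambda := \diag(\lambda_1,\dots,\lambda_k,0,\dots,0)$ with $\lambda \in (\bC^*)^k \setminus \Delta$, where $\Delta$ denotes the fat diagonal. Since $\phi$ shrinks spectra, for $X \in \cU$ the polynomial $k_{\phi(X)}$ factors as $x^{a_0(X)} \prod_i (x - \lambda_i(X))^{a_i(X)}$ with $a_0 + \sum a_i = m$, and as these multiplicities are integer-valued and continuous wherever the $k+1$ eigenvalues stay mutually isolated, each $a_i$ is locally constant on $\cU$.

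To upgrade local to global constancy, I would combine two connectedness arguments. On each $\GL(n)$-conjugation orbit $\GL(n)\cdot D_\lambda$, which is connected, the spectrum is literally constant, so the multiplicity of each specific value $\mu \in \spc(D_\lambda)$ in $k_{\phi(\cdot)}$ is constant along the orbit. Independently, varying $\lambda$ through the connected space $(\bC^*)^k \setminus \Delta$ and tracking the ``position-$i$'' eigenvalue shows that $a_i(D_\lambda) \equiv p_i$ is independent of $\lambda$. For any transposition $\sigma \in S_k$, the matrices $D_\lambda$ and $D_{\sigma\lambda}$ lie on the same $\GL(n)$-orbit, so the orbit invariance yields
\[
  \mathrm{mult}(\lambda_{\sigma(i)}, k_{\phi(D_\lambda)})
  =
  \mathrm{mult}(\lambda_{\sigma(i)}, k_{\phi(D_{\sigma\lambda})}),
\]
and the $\lambda$-independence identifies the two sides with $p_{\sigma(i)}$ and $p_i$ respectively, forcing $p_1 = \cdots = p_k =: p$ and $a_0 = m - pk$. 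Substituting gives $k_{\phi(X)} = k_X^p \cdot x^{m-pn}$ on $\cU$, and continuity together with density of $\cU$ in $M_n^{\le k}$ extends the identity to all of $M_n^{\le k}$.

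Part~\ref{item:th:sing:p01} is then immediate: the $0$-multiplicity $m - pk$ of $k_{\phi(X)}$ at generic $X$ must be nonnegative, so $p \le m/k$, and $2k > m$ collapses this to $p \in \{0,1\}$, matching the two alternatives. Part~\ref{item:th:sing:inj} amounts to excluding $p = 0$: otherwise $\phi$ would be a continuous injection from $M_n^{\le k}$ (of complex dimension $k(2n-k)$) into the nilpotent cone of $M_m$ (of complex dimension $m^2 - m$), which is forbidden by invariance of topological dimension once $k(2n-k) > m^2 - m$. Finally, for part~\ref{item:th:sing:inj.bis} with $m = n$, the algebraic identity $k(2n-k) - (n^2 - n) = n - (n-k)^2$ shows that $k > n - \sqrt{n}$ is exactly the hypothesis $k(2n-k) > m^2 - m$ of part~\ref{item:th:sing:inj}, and combined with the readily verified $2k > n$, part~\ref{item:th:sing:inj} gives $k_{\phi(X)} = k_X$. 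For the alternative hypothesis $\phi(M_n^{\le k}) \subseteq M_n^{\le k}$, the rank bound $\rk \phi(X) \le k$ pins the $0$-multiplicity of $k_{\phi(X)}$ to be at least $n - k$, and equating this with $n - pk$ from part~\ref{item:th:sing:p} forces $p \le 1$; the case $p = 0$ is excluded by injectivity together with a dimension count against the proper algebraic subvariety of nilpotent rank-$\le k$ matrices in $M_n^{\le k}$. The main technical obstacle is the orbit/parameter-space argument of part~\ref{item:th:sing:p}, where careful disentanglement of local eigenvalue tracking from global monodromy along $\GL(n)$-orbits is required; once past it, the remaining parts reduce to straightforward dimension-theoretic bookkeeping.
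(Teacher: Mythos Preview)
Your approach to parts \ref{item:th:sing:p}--\ref{item:th:sing:inj} is essentially the paper's: work on the dense set of rank-$k$ diagonalizable matrices with $k$ distinct nonzero eigenvalues, use connectedness of the configuration space together with $\GL(n)$-orbit invariance of $k_{\phi(\cdot)}$ to force a common multiplicity $p$, and then run the obvious numerics and dimension count. The paper organizes the argument by tracking the components $(\cD_n^{=k})'_J$ indexed by $J\in[n]^{(n-k)}$ rather than fixing a single parameter chart, but the content is the same.

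Where you genuinely diverge is the second alternative of part~\ref{item:th:sing:inj.bis}. The paper argues via invariance of domain: it shows $\phi|_{M_n^{=k}}$ carries some open set onto an open subset of $M_n^{=k}$, then finds a matrix in the image with $k$ distinct nonzero eigenvalues to pin down $p=1$. Your route is more direct: the rank bound $\rk\phi(X)\le k$ forces the algebraic multiplicity of $0$ in $k_{\phi(X)}$ to be at least $n-k$, and comparing with $n-pk$ gives $p\le 1$; then $p=0$ is killed because $\cN(n)\cap M_n^{\le k}$ is a proper closed subvariety of the irreducible $M_n^{\le k}$, hence of strictly smaller dimension. This is shorter and avoids the open-mapping machinery entirely.

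One caveat: in the first alternative of \ref{item:th:sing:inj.bis} you assert that $2k>n$ is ``readily verified'' from $k>n-\sqrt{n}$. This fails for $n=2,\ k=1$ (where $2k=n$), so part~\ref{item:th:sing:inj} cannot be invoked as stated. The paper's proof has the same implicit gap at this single edge case; for $n\ge 3$ (the regime used downstream) the implication does hold, since $k>n-\sqrt{n}$ with $k\in\bZ$ then forces $k\ge 2$ and hence $2k>n$.
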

Additionally, we provide an argument that for any fixed positive integer $k$ there exist (real analytic) embeddings of $M_n^{\le k}$ into the space of all $n\times n$ nilpotent matrices, provided $n$ is sufficiently large (Remark \ref{re:smlk.lgn}).

Using Theorem \ref{th:sing} combined with the techniques developed in \cite{MR4881574}, we then prove our main result -- a variant of Theorem \ref{th:Semrl} for sets $M_n^{\le k}$:
\begin{theorem}\label{th:sing2sing}
  Let $n \ge 3$ and $1 \le k < n$. If $\phi : M_n^{\le k} \to M_n^{\le k}$ is an injective continuous commutativity-preserving and spectrum-shrinking map, then there exists $T \in \mathrm{GL}(n)$ such that $\phi$ is of the form \eqref{eq:inner}.
\end{theorem}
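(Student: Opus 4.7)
The proof splits into two stages, following the paper's preview that Theorem \ref{th:sing} is ``combined with the techniques developed in \cite{MR4881574}''. The first stage upgrades the ``spectrum-shrinking'' hypothesis to full characteristic polynomial preservation; the second stage then runs a commutativity/projective-geometry argument in this much more rigid regime.

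For the first stage, with $m=n$, the map $\phi$ is injective, continuous, spectrum-shrinking, and satisfies $\phi(M_n^{\le k})\subseteq M_n^{\le k}\subseteq M_n$. The hypotheses of Theorem \ref{th:sing}\ref{item:th:sing:inj.bis} are therefore met, and we conclude $k_{\phi(X)}=k_X$ for every $X\in M_n^{\le k}$; in particular $\phi$ is spectrum-preserving.

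For the second stage, the plan is as follows. On the open dense subset of $M_n^{=k}$ consisting of matrices with $k$ distinct nonzero eigenvalues and trivial nilpotent part, preservation of the characteristic polynomial together with $\phi(X)\in M_n^{\le k}$ forces $r(\phi(X))=k$ and diagonalizability of $\phi(X)$. Using continuity of $\phi$ and the commutativity-preservation hypothesis, rank preservation propagates to every rank stratum, and in particular rank-one matrices are mapped to rank-one matrices. Writing a rank-one matrix as $uv^t$ and tracking its image and cokernel as a pair $([u],[v])\in \mathbb{P}^{n-1}\times\mathbb{P}^{n-1}$, the commutativity relations between rank-one matrices translate into projective incidence conditions. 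A variant of the Fundamental Theorem of Projective Geometry, as exploited in \cite{Semrl} and \cite{MR4881574}, then yields semilinear bijections of $\mathbb{C}^n$; continuity reduces the field automorphism to the identity (complex conjugation is ruled out by spectrum preservation on matrices with non-real eigenvalues), and injectivity together with the transpose alternative identifies $\phi$ on rank-one matrices as $uv^t\mapsto (Tu)(T^{-t}v)^t$ or its transposed analogue, for some $T\in\mathrm{GL}(n)$. Finally, each diagonalizable $X\in M_n^{\le k}$ is a sum of at most $k$ mutually commuting scaled rank-one idempotents; commutativity preservation together with spectrum preservation force $\phi$ to act on such $X$ as the candidate $T(\cdot)T^{-1}$ (or its transposed version), and continuity extends this identification to all of $M_n^{\le k}$.

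The main obstacle, I expect, is the rank-one analysis. For small $k$, rank-one matrices occupy a thin slice of $M_n^{\le k}$, and the commutativity relations feeding the Fundamental Theorem of Projective Geometry are leaner than in \v{S}emrl's full-algebra setting; the extreme case $k=1$, where $M_n^{\le k}$ is essentially the rank-one variety itself, is especially delicate. A secondary technical issue is propagating rank preservation off the generic stratum, which must be squeezed out of commutativity-preservation alone without recourse to invertible elements in the domain.
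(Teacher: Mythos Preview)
Your first stage is correct and matches the paper exactly: Theorem \ref{th:sing}\ref{item:th:sing:inj.bis} immediately upgrades spectrum-shrinking to characteristic-polynomial preservation.

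Your second stage, however, diverges substantially from the paper's argument and contains real gaps. The paper does \emph{not} invoke the Fundamental Theorem of Projective Geometry here; the reference to \cite{MR4881574} points to an elementary matrix-unit approach. Concretely, the paper first normalizes so that $\phi$ fixes the diagonal matrix $\Lambda_n^k=\diag(1,\ldots,k,0,\ldots,0)$, then runs a path-connectedness argument (via Lemma \ref{le:topological lemma}) to show $\phi$ is the identity on $(\ca{D}_n^{\le k})_{\{k+1,\ldots,n\}}$, and then an inductive argument on $K\in\{k,\ldots,n\}$ extends this to all of $\ca{D}_n^{\le k}$. From there one deduces that on each conjugate $S\ca{D}_n^{\le k}S^{-1}$ the map acts as a single conjugation; this gives orthogonality preservation on diagonalizable matrices, rank-one preservation, and then a direct analysis of $\phi(E_{ij})$ (using restrictions to $3\times 3$ subblocks and the $n=3$ case from \cite{PetekSemrl}) pins down $\phi$ on matrix units. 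The final step uses that on rank-one non-nilpotents $\phi$ is already the identity, and then the linearity of $T(\cdot)T^{-1}$ on each $S\ca{D}_n^{\le k}S^{-1}$ closes the argument.

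Your sketch has two soft spots. First, commutativity of rank-one matrices $uv^t\leftrightarrow xy^t$ does not translate directly into projective incidence: it says the matrices are either parallel or satisfy $v^t x=y^t u=0$, so to feed FTPG you would first need to separate these cases, which in turn requires knowing $\phi$ preserves parallelism (homogeneity) and rank one---neither of which you have established at that point. Second, your final extension ``commutativity preservation together with spectrum preservation force $\phi$ to act on such $X$ as the candidate'' is not justified: knowing $\phi(P_i)=TP_iT^{-1}$ for orthogonal rank-one idempotents $P_i$ and $\phi(X)\leftrightarrow TP_iT^{-1}$ only forces $\phi(X)$ into block form; it does not by itself select the correct eigenvalue in each block, nor kill the complementary block. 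The paper avoids both issues by going through diagonals first, so that additivity comes for free from the linearity of a single conjugation on each maximal torus conjugate.
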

In contrast to the previously discussed results, the injectivity condition in Theorem \ref{th:sing2sing} proves to be indispensable (see Remark \ref{re:indispensable}). Further, when $k=n-1$, leveraging the fact that any injective continuous spectrum-shrinking map $\phi : \sing(n)\to M_n$ automatically preserves  characteristic polynomials (Theorem \ref{th:sing}\ref{item:th:sing:inj.bis}), so that in particular $\phi(\sing(n)) \subseteq \sing(n)$, we obtain the following consequence of Theorem \ref{th:sing2sing}, for the case when $\phi$ takes values in $M_n$:
\begin{corollary}\label{cor:singular}
If $\phi :\sing(n) \to M_n$, $n \ge 3$, is an injective continuous commutativity-preserving and spectrum-shrinking map, then there exists $T \in \mathrm{GL}(n)$ such that $\phi$ is of the form \eqref{eq:inner}.
\end{corollary}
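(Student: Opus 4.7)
The plan is to reduce Corollary \ref{cor:singular} to Theorem \ref{th:sing2sing} by first showing that $\phi$ is automatically $\sing(n)$-valued. The key observation is that $\sing(n)=M_n^{\le n-1}$, so taking $k=n-1$ and $m=n$ in Theorem \ref{th:sing}, the inequality $k>n-\sqrt{n}$ reduces to $\sqrt{n}>1$, which holds for all $n\ge 2$. Therefore Theorem \ref{th:sing}\ref{item:th:sing:inj.bis} applies under the standing hypotheses (injectivity, continuity, spectrum-shrinking) and forces $\phi$ to preserve characteristic polynomials, i.e.\ $k_{\phi(X)}=k_X$ for every $X\in\sing(n)$.

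Since $X\in\sing(n)$ means $0$ is a root of $k_X$, the identity $k_{\phi(X)}=k_X$ implies $\det\phi(X)=0$, i.e.\ $\phi(X)\in\sing(n)$. Thus the a priori codomain $M_n$ is automatically upgraded to $\phi(\sing(n))\subseteq\sing(n)$, and $\phi$ becomes an injective continuous commutativity-preserving spectrum-shrinking self-map of $M_n^{\le n-1}$. Theorem \ref{th:sing2sing} with $k=n-1$ then delivers $T\in\GL(n)$ such that $\phi$ is of the form \eqref{eq:inner}.

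There is essentially no obstacle in this reduction; the entire content of the corollary rests on the prior Theorems \ref{th:sing}\ref{item:th:sing:inj.bis} and \ref{th:sing2sing}, while the bridge between them requires only the elementary observation that characteristic-polynomial preservation transfers singularity of $X$ to $\phi(X)$. The one point worth flagging is that we are invoking Theorem \ref{th:sing}\ref{item:th:sing:inj.bis} \emph{without} the commutativity-preservation hypothesis, which it does not need; this is precisely why the corollary follows cleanly rather than requiring a re-run of the arguments underlying Theorem \ref{th:sing2sing} in the wider codomain $M_n$.
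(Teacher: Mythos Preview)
Your proposal is correct and follows essentially the same approach as the paper: invoke Theorem \ref{th:sing}\ref{item:th:sing:inj.bis} with $k=n-1$ (using $n-1>n-\sqrt{n}$) to force characteristic-polynomial preservation, deduce $\phi(\sing(n))\subseteq\sing(n)$, and then apply Theorem \ref{th:sing2sing}.
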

The proofs of Theorems \ref{th:sing} and \ref{th:sing2sing} will be provided in the next section.  We further remark that for $k=1$, Theorem \ref{th:sing2sing} cannot be strengthened for maps with values in $M_n$, as the following example demonstrates:
$$
\phi : M_{n}^{\le 1} \to M_n, \qquad \phi(X):=\Tr(X)I - X
$$
(where $\Tr(X)$ denotes the trace of $X$). Clearly, $\phi$ is a linear (hence continuous) injective map that preserves both spectrum and commutativity, yet it does not assume the form of \eqref{eq:inner}.  However,  we are currently uncertain whether Theorem \ref{th:sing2sing} holds for maps $\phi :  M_n^{\le k} \to M_n$  when $2 \le k \le n-2$  and we anticipate addressing this question in future research. 

\subsection*{Acknowledgments}
We sincerely appreciate the referee's thorough review of the paper and the valuable insights and suggestions they offered.

\section{Proofs of Theorems \ref{th:sing} and \ref{th:sing2sing}}\label{se:sing.sp.pres}
Before proving  Theorem \ref{th:sing} we need some preparation. We begin by citing the next simple, yet useful lemma from \cite{2501.06840v2}.
\begin{lemma}[Lemma~1.7 of \cite{2501.06840v2}]\label{le:topological lemma}
  Let $X$ and  $Y$ be topological spaces such that $X$ is connected and $Y$ is Hausdorff. Let $n\in \N$ and suppose that $g,f_1,\ldots,f_n : X \to Y$ are continuous functions such that for each $x \in X$ we have
  \begin{itemize}[wide]
  \item $g(x) \in \{f_1(x),\ldots,f_n(x)\}$,
  \item $f_i(x) \ne f_j(x)$ for all $1 \le i \ne j \le n$.
  \end{itemize}
  Then there exists $1 \leq i \leq n$ such that $g=f_i$.
\end{lemma}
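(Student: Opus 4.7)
The plan is to reformulate the pointwise branching of $g$ among the $f_i$ as a partition of $X$ into clopen pieces, then invoke connectedness. For each $i\in\{1,\dots,n\}$, introduce the coincidence set
\[
E_i := \{x\in X \,:\, g(x)=f_i(x)\}.
\]
By the first hypothesis, $X=\bigcup_{i=1}^n E_i$, and by the second hypothesis these sets are pairwise disjoint: if $x\in E_i\cap E_j$ with $i\ne j$, then $f_i(x)=g(x)=f_j(x)$, contradicting $f_i(x)\ne f_j(x)$.

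Next I would show each $E_i$ is closed. Since $Y$ is Hausdorff, the diagonal $\Delta_Y\subseteq Y\times Y$ is closed, and $E_i$ is the preimage of $\Delta_Y$ under the continuous map $x\mapsto(g(x),f_i(x))$. Because the cover $X=\bigsqcup_{i=1}^n E_i$ is finite and disjoint, the complement of each $E_i$ is the finite union $\bigcup_{j\ne i} E_j$ of closed sets, hence closed; so every $E_i$ is in fact clopen.

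Finally, connectedness of $X$ forces at most one $E_i$ to be nonempty, and the covering property forces at least one to equal $X$. Thus there exists $i$ with $E_i=X$, which is exactly the statement $g=f_i$.

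There is no real obstacle here; the only mild subtlety is checking that the cover is by closed sets (which uses the Hausdorff hypothesis in an essential way, via closedness of $\Delta_Y$) and exploiting finiteness of $n$ to upgrade "closed" to "clopen." Connectedness then does the rest in one stroke.
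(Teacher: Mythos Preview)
Your argument is correct and complete: the clopen-partition argument via the coincidence sets $E_i$ is the standard way to prove this, and you have identified precisely where each hypothesis (Hausdorff for closedness of the diagonal, finiteness of $n$ for upgrading closed to clopen, connectedness for the conclusion) is used. The paper does not actually supply its own proof of this lemma---it is merely cited from \cite{2501.06840v2}---so there is nothing to compare against, but your proof is exactly what one would expect to find there.
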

Let us now introduce some notation which will be used throughout the paper. First of all, by ${\mathbb{C}}^{\times}$ we denote the multiplicative group of non-zero complex numbers. For an integer $n\in {\mathbb{Z}_{\ge 1}}$ we write 
\begin{equation*}
[n]:=\{1,\ldots, n\},
\end{equation*}
and for any subset $J \subseteq [n]$, we denote its cardinality by $|J|$. For each $0 \leq k \leq n$, we let 
 \begin{equation*}
  [n]^{(k)}:=
          \left\{J \subseteq [n] \ : \ |J|=k\right\},
\end{equation*}
to denote the family of all $k$-element subsets of $[n]$.

\smallskip

As usual, we identify vectors in ${\mathbb C}^n$ with the corresponding column-matrices. Given a matrix $A \in M_n$, we denote its characteristic polynomial by
\begin{equation*}
k_A(x) := \det(x I-A).
\end{equation*}
Additionally, for $i,j \in [n]$ we denote by $E_{ij}\in M_n$ the standard matrix unit with $1$ at the position $(i,j)$ and $0$ elsewhere. Similarly, the canonical basis vectors of ${\mathbb C}^n$ are denoted by $e_1, \ldots, e_n$.

\smallskip

Let $\mathcal{N}(n)$ denote the set of all nilpotent matrices in $M_n$, and let $\mathcal{D}_n \subset M_n$ be the subalgebra of diagonal matrices. For each integer $0 \leq k \leq n$, denote by
\begin{equation*}
\mathcal{D}_n^{\le k}:= \mathcal{D}_n\cap M_n^{\leq k},
\end{equation*}
the set of $n \times n$ diagonal matrices of rank at most $k$. Given any subset $\mathcal{S}\subseteq \mathcal{D}_n$ and any  $J\in   [n]^{(k)}$, set
\begin{equation*}
 \mathcal{S}_{J}:=
          \left\{\diag(\lambda_1, \ldots, \lambda_n)\in \mathcal{S} \ :\   \lambda_j=0 \text{ for all } j \in J\right\},         
\end{equation*}
so that $\mathcal{S}_{J}\subseteq \mathcal{D}_n^{\le n-k}$.

\smallskip

In what follows, unless explicitly stated otherwise, we fix integers $n\in {\mathbb{Z}_{\ge 2}}$ and $k \in [n]$.

\begin{lemma}\label{le:diagonal change of nilpotent Jordan block}
  Let $S \subseteq [n-1]$ and $A := \sum_{j \in S}E_{j,j+1} \in M_n$. For each $j \in S$ choose some $\lambda_j \in \C^\times$. Then the matrix 
  \begin{equation*}B := \sum_{j \in S}(\lambda_jE_{jj} + E_{j,j+1}) \in M_n\end{equation*}
  satisfies $r(B) = r(A)$.
\end{lemma}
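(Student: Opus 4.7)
The plan is to establish the equality of ranks by directly comparing row spaces. Both $A$ and $B$ have all rows outside $S$ equal to zero, so $r(A), r(B) \le |S|$. The nonzero rows of $A$ are the standard basis row vectors $e_{j+1}^T$ for $j \in S$, which are distinct (as $j\mapsto j+1$ is injective) and therefore trivially linearly independent, giving $r(A)=|S|$. Thus it remains to show that the nonzero rows of $B$, namely $\lambda_i e_i^T + e_{i+1}^T$ for $i\in S$, are also linearly independent, yielding $r(B) = |S| = r(A)$.

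For the independence, I would enumerate $S=\{i_1 < i_2 < \cdots < i_m\}$ and use forward substitution. Assuming a hypothetical dependence $\sum_{\ell=1}^m c_\ell(\lambda_{i_\ell} e_{i_\ell}^T + e_{i_\ell+1}^T) = 0$, reading off the $i_\ell$-th coordinate of this equation isolates at most two summands: the $\ell$-th contributes $\lambda_{i_\ell} c_\ell$, and the $(\ell-1)$-th contributes $c_{\ell-1}$ precisely when $i_\ell - 1 = i_{\ell-1}$ (no earlier summand can contribute, as its ``off-diagonal'' entry sits in column $i_r+1 \le i_{\ell-1}+1 \le i_\ell$, with equality only in the noted case). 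A straightforward induction on $\ell$, using $\lambda_{i_\ell}\neq 0$, then forces every $c_\ell$ to vanish.

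Conceptually, the argument just records that $B$ is upper-triangular with $\lambda_i$ as the leading (leftmost) nonzero entry of row $i$ for each $i\in S$; these pivots lie in distinct columns, immediately guaranteeing row independence. I do not anticipate any substantive obstacle — the lemma is a small technical observation to the effect that the diagonal perturbation $\sum_{j\in S}\lambda_j E_{jj}$, supported on the same row indices as $A$, cannot collapse the row space of $A$ when all $\lambda_j$ are nonzero.
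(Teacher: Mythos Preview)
Your argument is correct: the rows of $B$ indexed by $S$ have pairwise distinct leading (pivot) columns, which immediately gives linear independence and hence $r(B)=|S|=r(A)$. The paper itself dismisses this lemma as ``an elementary linear algebra exercise'' without further detail, so your write-up is a perfectly acceptable way to fill in that exercise.
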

\begin{proof}
  An elementary linear algebra exercise.
\end{proof}
Throughout,  the abbreviation ``p.\ d.'' stands for ``pairwise distinct''.

\smallskip
 For a matrix $X = [X_{ij}]_{i,j=1}^n \in M_n$ and a  vector $v = (v_1,\ldots,v_n) \in \C^n$, we define their respective \emph{supports} as 
\begin{equation*}
\supp X:=\{(i,j)\in [n]^2 : X_{ij}\ne 0\} \quad \text{and} \quad \supp v: = \{ j \in [n] : v_j \ne 0\}.
\end{equation*}
\begin{lemma}\label{le:Rk density argument}\phantom{x}
  \begin{enumerate}[(a),wide]
  \item The set
    \begin{equation*}\ca{S} := \{X \in M_n^{=k} \ : \ k_X(x) = (x-\lambda_1)\cdots (x-\lambda_k)x^{n-k} \text{ with p.\ d.\ }\lambda_1,\ldots,\lambda_k \in \C^\times\}\end{equation*}
    is dense in $M_n^{=k}$.
  \item Suppose that $A \in M_n^{=k}$ satisfies $\spc(A)= \{0,\lambda_1,\ldots,\lambda_k\}$ for some distinct $\lambda_1,\ldots,\lambda_k \in \C^\times$. Then $A$ is diagonalizable.
  \end{enumerate}
\end{lemma}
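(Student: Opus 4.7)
My plan is to reduce both statements to direct computations on Jordan canonical form, with Lemma \ref{le:diagonal change of nilpotent Jordan block} playing the central role for part~(a).

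For (a), given any $A \in M_n^{=k}$, I would write $A = PJP^{-1}$ in Jordan form. Because a nonzero-eigenvalue block $J_\lambda(t)$ has full rank $t$ while a nilpotent block $J_0(s)$ has rank $s-1$, the hypothesis $r(A)=k$ forces exactly $n-k$ Jordan blocks with eigenvalue $0$. I then perturb each block independently: each $J_\lambda(t)$ with $\lambda\in \C^\times$ gets its diagonal entries replaced by small distinct perturbations of $\lambda$, preserving full rank since the block remains upper triangular with nonzero diagonal; each $J_0(s)$ with $s\ge 2$ is perturbed by installing small distinct nonzero scalars $\varepsilon_1,\dots,\varepsilon_{s-1}$ on its first $s-1$ diagonal positions (leaving the last entry $0$), which by Lemma \ref{le:diagonal change of nilpotent Jordan block} keeps the rank at $s-1$ and whose upper-triangular form exhibits eigenvalues $\varepsilon_1,\dots,\varepsilon_{s-1},0$; size-$1$ nilpotent blocks are left untouched and continue to contribute a single $0$ eigenvalue. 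A sufficiently generic small choice of all perturbation parameters across all blocks yields $k$ pairwise distinct nonzero eigenvalues, and conjugating back by $P$ produces a sequence in $\cS$ converging to $A$.

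Part (b) is a pure multiplicity count. From $r(A)=k$ we get $\dim \ker A = n-k$, so $0$ has geometric (hence algebraic) multiplicity at least $n-k$; since the $k$ distinct nonzero $\lambda_i$ each contribute algebraic multiplicity at least $1$, these multiplicities are forced to be exactly $n-k$ for $0$ and exactly $1$ for each $\lambda_i$. Every $\lambda_i$ is then automatically semisimple, and geometric and algebraic multiplicities of $0$ coincide, so $A$ is diagonalizable.

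I anticipate only minor bookkeeping in (a): arranging the perturbation parameters to be simultaneously pairwise distinct and disjoint from the set of preexisting nonzero eigenvalues coming from the $J_\lambda(t)$ blocks, so that the final characteristic polynomial has the required form. These are Zariski-open conditions on the parameters, so any sufficiently generic and small choice satisfies them all at once.
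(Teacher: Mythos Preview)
Your proposal is correct and follows essentially the same route as the paper: both arguments for (a) reduce to Jordan form, perturb the diagonal of each block (invoking Lemma~\ref{le:diagonal change of nilpotent Jordan block} for the nilpotent part) to land in $\cS$ while preserving rank, and both arguments for (b) amount to the observation that nullity $n-k$ forces all Jordan $0$-blocks to be $1\times 1$ once the $k$ distinct nonzero eigenvalues are accounted for. Your write-up is slightly more explicit about the genericity bookkeeping in (a) and the multiplicity count in (b), but there is no substantive difference.
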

\begin{proof}
  \begin{enumerate}[(a),wide]
  \item Let $A \in M_n^{=k}$ be arbitrary. Without loss of generality, assume that $A$ equals its Jordan form, which takes the shape
    \begin{equation*}A = \diag(J(\lambda_1)_{p_1\times p_1},\ldots,J(\lambda_r)_{p_r\times p_r},J(0))\end{equation*}
    for some distinct $\lambda_1,\ldots,\lambda_r \in \C^\times$, where each $J(\lambda)_{p\times p}$ denotes a $p\times p$ Jordan matrix with $\lambda$-s on the diagonal. We have
    \begin{equation*}k = r(A) = p_1 + \cdots + p_r + r(J(0)).\end{equation*}
    Let $B \in M_n$ be any matrix whose off-diagonal elements match those of $A$, while its diagonal consists of distinct non-zero complex numbers on the positions $[p_1+\cdots+p_r]$ and $\{p_1+\cdots+p_r + j : (j,j+1) \in \supp J(0)\}$. Then $B$ retains the block-diagonal structure of $A$, and its first $r$ blocks are invertible matrices (just like those of $A$), while its last block has the same rank as $J(0)$ by Lemma \ref{le:diagonal change of nilpotent Jordan block}. It follows that $r(B) = r(A) = k$. We conclude that any such matrix $B$ in fact lies in $\ca{S}$. It is evident we can construct such a $B$ arbitrarily close to $A$.

     \smallskip

  \item Since the nullity of $A$ is precisely $n-k$, there are that many elementary $0$-blocks in the Jordan form of $A$. 
  \end{enumerate}
  
\end{proof}
\begin{lemma}\label{le:open to open}
  Suppose that $\phi : M_n^{=k} \to M_n^{\le k}$ is an injective continuous map. Then there exists an open subset $U \subseteq M_n^{=k}$ such that $\phi(U)$ is an open subset of  $M_n^{=k}$.
\end{lemma}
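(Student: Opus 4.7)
The plan is to combine a dimension-counting argument with Brouwer's invariance of domain theorem. I would set $V:=\phi^{-1}(M_n^{=k})$; since $M_n^{=k}$ is open in $M_n^{\le k}$ and $\phi$ is continuous, $V$ is open in $M_n^{=k}$, and it will then be enough to prove (a) that $V$ is non-empty and (b) that $\phi|_V$ has open image, for in that case $U:=V$ will do.

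For step (a) I would argue by contradiction and assume $\phi(M_n^{=k})\subseteq M_n^{\le k-1}$. Recall that $M_n^{=k}$ is a smooth complex submanifold of $M_n$ of real dimension $d:=2k(2n-k)$, whereas $M_n^{\le k-1}$ is an irreducible complex algebraic subvariety of strictly smaller complex dimension $(k-1)(2n-k+1)$, and hence of topological (covering) dimension at most $d':=2(k-1)(2n-k+1)<d$. I would pick any non-empty open $W\subseteq M_n^{=k}$ whose closure $\overline W\subseteq M_n^{=k}$ is compact (possible by local compactness). The restriction $\phi|_{\overline W}$ is a continuous injection from a compact Hausdorff space, hence a homeomorphism onto its image; thus $\dim \phi(\overline W)=\dim \overline W=d$. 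On the other hand $\phi(\overline W)\subseteq M_n^{\le k-1}$ forces $\dim \phi(\overline W)\le d'<d$, a contradiction.

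For step (b), with $V\ne\emptyset$ established, the restriction $\phi|_V:V\to M_n^{=k}$ is a continuous injection between topological manifolds of the same real dimension $d$, so Brouwer's invariance of domain theorem will give that $\phi(V)$ is open in $M_n^{=k}$, as desired.

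I expect the main obstacle to be step (a): a priori $\phi$ might collapse all of $M_n^{=k}$ into the lower-rank stratum $M_n^{\le k-1}$ (the ambient target being $M_n^{\le k}$), and ruling this out requires bringing topological dimension into play, together with the fact that continuous injections are topological embeddings when restricted to compact sets. Once this is in hand, step (b) is a direct invocation of the classical invariance of domain.
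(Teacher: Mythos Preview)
Your proof is correct and follows essentially the same strategy as the paper's: rule out the possibility that $\phi$ lands entirely in lower-rank matrices by a dimension argument, then apply invariance of domain on the rank-$k$ stratum. The only minor difference is in step (a): the paper lets $m:=\max_{A\in M_n^{=k}} r(\phi(A))$ and, assuming $m<k$, uses lower semicontinuity of rank to find an open set mapping into the \emph{manifold} $M_n^{=m}$ (so that ordinary invariance of domain gives the contradiction directly), whereas you argue with the covering dimension of the singular variety $M_n^{\le k-1}$ via a compact-set embedding; both work and yield the same conclusion.
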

\begin{proof}
 First recall \cite[Example 14.16]{har_ag} that for any $k\in [n]$, $M_n^{=k}$ is an irreducible, smooth, complex algebraic variety of dimension $k(2n-k)$, and hence also a smooth connected \emph{real} manifold of dimension $2k(2n-k)$ and that $M_n^{=k}$ is open in $M_n^{\le k}$ (by the lower semicontinuity of the rank). Let
    \begin{equation*}
      m:=\max_{A \in M_n^{=k}} r(\phi(A)).
    \end{equation*}
    If $m <k$ then, by continuity, $\phi$ would map some open subset of $M_n^{=k}$ into $M_{n}^{=m}$, which would contradict the \emph{Invariance of Domain theorem} \cite[Corollary IV.19.9]{bred_gt_1997}. Hence, $m=k$, so there exists an open subset $U \subseteq M_n^{=k}$ such that $\phi(U)\subseteq M_n^{=k}$. By another application of domain invariance, we obtain that $\phi(U)$ is open in $M_n^{=k}$.
\end{proof}

\begin{proof}[Proof of Theorem \ref{th:sing}] We address the claims separately. 
  
  \begin{enumerate}[label={},wide]

  \item \textbf{\ref{item:th:sing:p}} Denote by $(M_{n}^{= k})'_{s}\subset M_{n}^{\le k}$ the collection of all semisimple (rank-$k$) matrices with precisely $k$ non-zero distinct eigenvalues, and by $(\ca{D}^{=k}_n)':=(M_{n}^{= k})_{s}\cap \ca{D}_n$ that of diagonal matrices therein (since diagonal matrices are semisimple, we omit the subscript ``s''.). Because the continuous polynomial-valued map $k_{\phi(\cdot)}$ is constant on every (connected!) conjugacy class, it is enough to examine its effect on $(\ca{D}_n^{= k})'$, as $\Ad_{\GL(n)}(\ca{D}_n^{= k})'=(M_{n}^{=k})_{s}'$ and the latter set is dense in $M_{n}^{\le k}$. In terms of $(\ca{D}_n^{= k})'$, the statement translates to the claim that all non-zero eigenvalues of $X\in (\ca{D}_n^{=k})'$ have the same algebraic multiplicity $p$ for $\phi(X)$.
    
Following our notation, consider the sets
    \begin{equation}\label{eq:def.dnj}
      (\ca{D}_n^{= k})'_{J}
        =
          \left\{\diag(\lambda_1, \ldots, \lambda_n)\in (\ca{D}_n^{= k})'\ :\ \lambda_j=0\iff j\in J\right\}
          ,\quad
          J\in   [n]^{(n-k)}.
    \end{equation}
Note that each space  $(\ca{D}_n^{= k})'_{J}$ is path-connected, as it is homeomorphic to the $k$-th configuration space 
    \begin{equation*}
        \ca{C}^k(\C^\times) = \{(z_1,\ldots,z_k) \in (\C^\times)^k \ :\ z_i \ne z_j \text{ for all } i \ne j\},
    \end{equation*}
    of $\C^\times$ (see e.g.\ \cite[Definition 1.1]{zbMATH05785888}). Moreover, the (path-)connected components of $(\ca{D}_n^{=k})'$ are precisely given by  \eqref{eq:def.dnj}. The isotropy subgroup $S_{n,(\ca{D}_n^{= k})'_{J}}\le S_n$ of $(\ca{D}_n^{= k})'_{J}$ in $S_n\le \GL(n)$ is the intersection of the isotropy groups of all $j\in J$, and hence permutes the $k$ elements of $[n]\setminus J$ transitively. It follows that on $(\ca{D}_n^{= k})'_{J}$ the polynomial $k_{\phi(\cdot)}$ is symmetric in $\lambda_j$, $j\in [n]\setminus J$: for any permutation $\sigma\in S_n$ fixing $J$ pointwise and $D=\diag(\lambda_1, \ldots, \lambda_n)\in (\ca{D}_n^{= k})'_{J}$ we have 
    \begin{equation*}
      \begin{aligned}
        x^{m-\sum p_j}\prod_{j\not\in J}(x-\lambda_j)^{p_j}
        &=
          k_{\phi(D)}(x)
          \quad\text{for $p_j\in \bZ_{\ge 0}$ depending on $J$}\\
        &=
          k_{\phi(\sigma^{-1} D\sigma)}(x)
          =k_{\phi\left(\diag(\lambda_{\sigma(1)}, \ldots ,\lambda_{\sigma(n)})\right)}(x)
          \quad\left(\text{$\Ad$-invariance of $k_{\phi(\cdot)}$}\right)\\
        &=
          x^{m-\sum p_j}\prod_{j\not\in J}(x-\lambda_{\sigma (j)})^{p_j}
          \quad\text{(because $\sigma\in S_{n,(\ca{D}_n^{= k})'_{J}}$)},
      \end{aligned} 
    \end{equation*}
    where, as before, we identified $S_n$ with  permutation matrices in $\GL(n)$. 
    This being valid for \emph{every} permutation $\sigma$ of $[n]\setminus J$ (regarded as a permutation of $[n]$ fixing $J$ pointwise) and arbitrary non-zero $\lambda_j$, $j\not\in J$, we conclude that all $p_j$, $j\not\in J$, must be equal. 
    
    
    The argument thus far ensures the existence of (perhaps $J$-dependent) non-negative integers $p=p_J$ satisfying \eqref{eq:somep} for $D\in (\ca{D}_n^{= k})'_{J}$ respectively. To check that all $p_J$ are equal, fix a diagonal matrix
    \begin{equation*}
      D:=\diag(\lambda_1, \ldots, \lambda_n)\in (\ca{D}_n^{=k })'_{J}
      ,\quad
      \lambda_j\ne 0\text{ distinct for }j\not\in J,
    \end{equation*}
    and a permutation $\sigma\in S_n$ mapping $J$ onto $J'$ for $J,J'\in [n]^{(n-k)}$. We then have
    \begin{equation*}
      \begin{aligned}
        x^{m-p_J k}\prod_{j\not\in J}(x-\lambda_j)^{p_J}
        &=
          k_{\phi(D)}(x)\\
        &=
          k_{\phi(\sigma^{-1} D\sigma)}(x)
          =k_{\phi\left(\diag(\lambda_{\sigma(1)}, \ldots ,\lambda_{\sigma(n)})\right)}(x)
          \quad\left(\text{$k_{\phi(\cdot)}$ is $\Ad$-invariant}\right)\\
        & =
          x^{m-p_{J'} k}\prod_{j\not\in J}(x-\lambda_j)^{p_{J'}},
      \end{aligned}      
    \end{equation*}
    where the last equation follows from the fact that $\sigma(J)=J'$ and
    \begin{equation*}
      \diag(\lambda_{\sigma(1)}, \ldots , \lambda_{\sigma(n)})\in (\ca{D}_n^{= k})'_{J'}.
    \end{equation*}
  Given that the $\lambda_j$, $j\not\in J$, are assumed distinct and non-zero, the equality is sufficient to conclude that $p_J=p_{J'}$. Since $J,J'\in [n]^{(n-k)}$ were arbitrary, we are done.
    
    \smallskip
    
  \item \textbf{\ref{item:th:sing:p} $\implies$ \ref{item:th:sing:p01}.} The assumed inequality in \ref{item:th:sing:p01} ensures that the only possible $p$ in \eqref{eq:somep} are $p=0,1$, corresponding to the two options the statement lists.

    \smallskip

  \item \textbf{\ref{item:th:sing:p} $\implies$ \ref{item:th:sing:inj}.} Part \ref{item:th:sing:p01} reduces the problem to arguing that in the specified regime for $m$ and $n$ our map $\phi$ cannot take nilpotent values only. This is a simple dimension count: 
    \begin{itemize}[wide]
    \item $M_n^{\le k}$ is a complex algebraic variety of (complex) dimension $k(2n-k)$ \cite[Example 14.16]{har_ag}.

    \item The space $\ca{N}(m)$ of nilpotent $m\times m$ matrices is a complex variety of dimension $m^2-m$ (e.g. \cite[Corollary 10.2.5 and Example 10.2.6]{htt_d}).

    \item Their respective \emph{covering dimensions} \cite[Definition 1.6.7]{eng_dim} are thus $2k(2n-k)$ and $2(m^2-m)$ respectively (for analytic varieties can be stratified by manifolds \cite[Theorem 6.3.3]{kp_analytic} and the covering dimension matches that of the largest manifold stratum by \cite[Corollary 1.5.4 and Theorem 1.8.2]{eng_dim}).
      
    \item And the larger-dimensional space cannot embed topologically into the smaller \cite[Theorem 1.1.2]{eng_dim}. 
    \end{itemize}    
    
    \smallskip

  \item \textbf{\ref{item:th:sing:p} $\implies$ \ref{item:th:sing:inj.bis}.} Assume that $m=n$ and that $\phi$ is injective. 
    \begin{case}
      $k > n - \sqrt{n}$.
    \end{case}
    This follows from \ref{item:th:sing:inj}: if $k > n - \sqrt{n}$ then $2k(2n-k)>2(n^2-n)$.

    \begin{case}
      $\phi(M_n^{\le k}) \subseteq M_{n}^{\le k}$.
    \end{case}

    In light of \ref{item:th:sing:p}, we have to argue that $p=1$ is the only possibility. Lemma \ref{le:open to open} applied to the map $\phi|_{M_n^{=k}}$ shows that there exist open sets $U,V \subseteq M_n^{=k}$ such that $\phi(U) = V$. By Lemma \ref{le:Rk density argument}, $V$ contains a matrix $B=\phi(A)$, $A\in U$ with $k$ distinct non-zero eigenvalues $\lambda_1,\ldots,\lambda_k \in \C^\times$. We thus have
    \begin{equation*}
      k_A(x)=x^{n-k}\prod_{j=1}^k (x-\lambda_j)=k_B(x)=k_{\phi(A)}(x),
    \end{equation*}
    so that indeed $p=1$ (by \ref{item:th:sing:p}, if this happens for \emph{one} matrix $A$, it happens globally on $M_n^{\le k}$).  \qedhere
  \end{enumerate}
\end{proof}

\begin{remark}\label{re:smlk.lgn}
  It is perhaps worth noting that the rigidity afforded by the assumed inequalities in items \ref{item:th:sing:p01}, \ref{item:th:sing:inj} and \ref{item:th:sing:inj.bis} of Theorem \ref{th:sing} is crucial (even taking the injectivity of $\phi$ for granted): as the following result shows, injections exist for any $k$ realizing any pre-selected $p$ in Theorem \ref{th:sing}\ref{item:th:sing:p} as long as $m$ is sufficiently large. In particular, setting $p=0$ in Proposition \ref{pr:any.kp} shows that for each $k \in \Z_{\ge 1}$ there exists a real analytic embedding of $M_n^{\le k}$ into the space of $n\times n$ nilpotent matrices for all sufficiently large $n$.
\end{remark}

\begin{proposition}\label{pr:any.kp}
  Let $k,p\in \bZ_{\ge 0}$ and $m,n\in \bZ_{\ge 1}$.

  If $m\ge pn$ and either $p\ge 1$ or $p=0$ and $m^2-m\ge 2k(2n-k)+1$ there are continuous embeddings
  \begin{equation*}
   \phi:  M_n^{\le k} \to  M_m
    \quad\text{with}\quad
    k_{\phi(X)}(x)
    =
    k_X^p(x)\cdot x^{m-pn}, \quad \text{ for all } X\in M_n^{\le k}.
  \end{equation*}
\end{proposition}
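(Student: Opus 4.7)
For $p \ge 1$ the construction is direct: set $\phi(X) := \mathrm{diag}(X, X, \ldots, X, 0_{m-pn}) \in M_m$ with $p$ copies of $X$ along the diagonal, where the hypothesis $m \ge pn$ ensures this fits. Then $\phi$ is linear, continuous, and injective, and a block-diagonal computation gives $k_{\phi(X)}(x) = k_X(x)^p \cdot x^{m - pn}$, exactly the required formula.

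For $p = 0$ the substantive task is to continuously embed $M_n^{\le k}$ into the nilpotent cone $\mathcal{N}(m) \subseteq M_m$. Write $d := k(2n-k) = \dim_{\mathbb{C}} M_n^{\le k}$, so the hypothesis reads $m^2 - m \ge 2d + 1$. My plan is a two-step construction. First, I would embed $M_n^{\le k}$ into $\mathbb{C}^{2d+1}$: this is a classical statement, either via Remmert--Bishop--Narasimhan applied to $M_n^{\le k}$ as a Stein space of complex dimension $d$, or equivalently via a generic-linear-projection argument for the affine variety $M_n^{\le k} \subseteq M_n \cong \mathbb{C}^{n^2}$, using the standard dimension bound on its secant variety. (If one is content with a merely topological embedding, the Menger--N\"obeling theorem applied to the separable metric space $M_n^{\le k}$, of covering dimension $2d$, gives an embedding into $\mathbb{R}^{4d+1} \subseteq \mathbb{C}^{2d+1}$.) Second, I would use that $\mathcal{N}(m)$ is smooth of complex dimension $m^2 - m$ at any regular nilpotent element $N_0$ (e.g.\ a single Jordan block of size $m$), so that there is a biholomorphism $\psi : U \to V$ between open neighborhoods $U \ni 0$ in $\mathbb{C}^{m^2 - m}$ and $V \ni N_0$ in $\mathcal{N}(m)$. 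Using the hypothesis $m^2 - m \ge 2d + 1$ to include $\mathbb{C}^{2d+1}$ as a coordinate subspace of $\mathbb{C}^{m^2 - m}$, I would precompose the first-stage embedding with a self-homeomorphism of $\mathbb{C}^{2d+1}$ onto a small open ball contained in $U$, and then apply $\psi$. The resulting map $\phi : M_n^{\le k} \to \mathcal{N}(m) \subseteq M_m$ is a continuous injection; since its image lies in $\mathcal{N}(m)$, we have $k_{\phi(X)}(x) = x^m$, matching the required formula $k_X^{0}(x) \cdot x^{m - 0\cdot n} = x^m$.

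The main technical ingredient is the first step of the $p = 0$ case, namely the Euclidean embedding of $M_n^{\le k}$; everything afterwards is formal dimension-counting combined with a local chart at a regular nilpotent. Because both the analytic embedding into $\mathbb{C}^{2d+1}$ and the chart $\psi$ can be chosen real analytic (indeed holomorphic), the final $\phi$ can be realized as a real analytic embedding, which is consistent with the real-analytic regularity claimed in Remark~\ref{re:smlk.lgn}.
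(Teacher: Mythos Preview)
Your proof is correct and follows essentially the same approach as the paper's: for $p\ge 1$ the block-diagonal construction with $p$ copies of $X$ padded by zeros, and for $p=0$ the composition of a Narasimhan-type proper holomorphic embedding $M_n^{\le k}\hookrightarrow \mathbb{C}^{2d+1}\subseteq \mathbb{C}^{m^2-m}$ with a real-analytic identification of $\mathbb{C}^{m^2-m}$ with a ball in the smooth (regular-nilpotent) locus of $\mathcal{N}(m)$. The only cosmetic difference is that the paper states this last identification in one line, whereas you spell it out as a local chart at a regular nilpotent precomposed with a shrinking diffeomorphism.
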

\begin{proof}
We can define $\phi$ block-diagonally as
  \begin{equation*}
    \phi(X):=\diag (\underbracket{X, \ldots, X}_{\text{$p$ times}}, \psi(X))
    \in
    M_m,
  \end{equation*}
  where $\psi$ is a continuous map into $\cN(m-pn)$ for all sufficiently large $m-pn$, chosen injective if $p=0$.
  
  It remains to argue that such injections do exist if $m^2-m\ge 2k(2n-k)+1$. Indeed, the proof of Theorem \ref{th:sing}\ref{item:th:sing:inj}  observes that $M_n^{\le k}$ and $\cN(m)$ are complex algebraic varieties of respective dimensions $k(2n-k)$ and $m^2-m$. They are also \emph{affine}, i.e. definable by polynomial equations as closed subsets of $\bC^N$ for appropriate $N$, hence of the form
  \begin{equation*}
    \left\{(z_1,\ldots ,z_N)\in \bC^N\ :\ f_i(z_1, \ldots, z_N)=0,\ i\in I\right\}
  \end{equation*}
  for some family $\left(f_i\right)_{i\in I}$ of polynomials in $N$ variables. By \cite[\S V.1, Theorem 1(b)]{gr_stein} this means they are \emph{Stein spaces} in the sense of \cite[\S IV.1, Definition 1]{gr_stein}, so in particular the inequality $m^2-m\ge 2k(2n-k)+1$ implies the existence of a proper holomorphic embedding  
    \begin{equation*}
      M_n^{\le k}
      \lhook\joinrel\xrightarrow{\quad}\bC^{m^2-m}
    \end{equation*}
  by \cite[p.17, item 2.]{zbMATH03169618}. In turn, $\bC^{m^2-m}$ is real-analytically isomorphic to a ball in the $(m^2-m)$-dimensional open subvariety of $\cN(m)$ consisting of non-singular points. 
\end{proof}

We now embark on the proof of Theorem \ref{th:sing2sing}, which will be presented in a sequence of carefully structured steps, following the similar approach outlined in \cite{MR4881574}. We first introduce some auxiliary notation.
\begin{itemize}[wide]
\item For $X,Y \in M_n$, by $X \leftrightarrow Y$ and $X \perp Y$ we denote that  $XY = YX$ and $XY = YX = 0$, respectively.
\item For $k \in [n]$ denote 
\begin{equation*}\
  \Lambda^k_n := \diag(1,\ldots,k,0,\ldots,0) \in 
  (\ca{D}_n^{=k})'_{\{k+1,\ldots,n\}},
\end{equation*}
where the set $(\ca{D}_n^{=k})'_{\{k+1,\ldots,n\}}$ is as specified in \eqref{eq:def.dnj}.
\item For vectors $u,v \in \C^n$ by $u \parallel v$ we denote  that the set $\{u,v\}$ is linearly dependent. The same notation is used for matrices.
\end{itemize}
\begin{proof}[Proof of Theorem \ref{th:sing2sing}] First of all, by Theorem \ref{th:sing}\ref{item:th:sing:inj.bis}, $\phi$ preservers characteristic polynomials.

\begin{claim}\label{cl:identity on diagonals}
  Without loss of generality we can assume $\phi(\Lambda^k_n) = \Lambda^k_n$ and hence that \begin{equation}\label{eq:identity on partial diagonal}
  \phi(D) = D,  \quad \text{ for all } D \in (\ca{D}_n^{\le k})_{\{k+1,\ldots,n\}}.
  \end{equation}
\end{claim}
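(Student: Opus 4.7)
\emph{Reduction.} The argument splits into a similarity reduction securing $\phi(\Lambda^k_n) = \Lambda^k_n$, followed by a commutativity-plus-connectedness propagation. For the first step, $\phi$ already preserves characteristic polynomials by Theorem \ref{th:sing}\ref{item:th:sing:inj.bis}. Since $\Lambda^k_n$ has pairwise distinct non-zero eigenvalues $1,\dots,k$ together with $0$ of multiplicity $n-k$, Lemma \ref{le:Rk density argument}(b) forces $\phi(\Lambda^k_n)$ to be diagonalizable with the same spectrum, hence conjugate to $\Lambda^k_n$ via some $T \in \GL(n)$. Replacing $\phi$ by $T^{-1}\phi(\cdot)T$ preserves all the hypotheses (continuity, injectivity, commutativity-preservation, spectrum-shrinking) and the target $M_n^{\le k}$, while absorbing $T$ into any eventual conclusion of the form \eqref{eq:inner}. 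I therefore assume henceforth that $\phi(\Lambda^k_n) = \Lambda^k_n$.

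\emph{Propagation.} Any $D \in (\ca{D}_n^{\le k})_{\{k+1,\ldots,n\}}$ commutes with $\Lambda^k_n$, so $\phi(D)$ commutes with $\Lambda^k_n = \diag(1,\dots,k,0,\dots,0)$; because the first $k$ entries are pairwise distinct and non-zero while the last $n-k$ vanish, the centralizer of $\Lambda^k_n$ in $M_n$ consists precisely of block-diagonal matrices $\diag(a_1,\dots,a_k,B)$ with $a_i \in \C$ and $B \in M_{n-k}$. I then restrict attention to the dense open stratum $\ca{U} := (\ca{D}_n^{=k})'_{\{k+1,\ldots,n\}}$, which is connected (homeomorphic to the configuration space $\ca{C}^k(\C^\times)$), and prove that $\ca{F} := \{D \in \ca{U} : \phi(D) = D\}$ equals $\ca{U}$. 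Density of $\ca{U}$ in $(\ca{D}_n^{\le k})_{\{k+1,\ldots,n\}}$ and continuity of $\phi$ then yield \eqref{eq:identity on partial diagonal}. The set $\ca{F}$ is closed by continuity and contains $\Lambda^k_n$, so the key issue is openness.

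\emph{Local rigidity.} Fix $D_0 \in \ca{F}$ with diagonal entries $\mu_i^0$ on $[k]$. By continuity, for $D \in \ca{U}$ near $D_0$ one has $a_i(D)$ near $\mu_i^0$ and $B(D)$ near $0$; meanwhile Lemma \ref{le:Rk density argument}(b) renders $\phi(D)$ diagonalizable with spectrum $\{\mu_1(D),\dots,\mu_k(D),0,\dots,0\}$. Distinctness and non-vanishing of the $\mu_i^0$ put the $a_i(D)$ into disjoint disks around the $\mu_i^0$ away from $0$, while the eigenvalues of $B(D)$ cluster near $0$, so multiset matching of the spectra of $\phi(D)$ and $D$ forces $a_i(D) = \mu_i(D)$ and $\spc(B(D)) = \{0,\dots,0\}$. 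Since $B(D)$ is a block of a diagonalizable matrix it is itself diagonalizable, hence $B(D) = 0$ and $\phi(D) = D$. The main obstacle is exactly this coupled rigidity --- using distinctness plus continuity to rule out a hidden eigenvalue relabeling, and using diagonalizability to kill the potentially non-trivial nilpotent-like block $B$; everything else (the similarity trick, the centralizer computation, density and connectedness) is routine.
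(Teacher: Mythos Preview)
Your argument is correct, but it takes a different route from the paper. The paper argues globally: it picks a continuous path in $(\ca{D}_n^{=k})'_{\{k+1,\ldots,n\}}$ from an arbitrary $D$ to $\Lambda^k_n$, observes that the upper-left $k\times k$ block of $\phi$ along the path must agree with one of finitely many continuous ``candidate'' functions, and then invokes Lemma~\ref{le:topological lemma} to pin it down as the identity; the lower-right block is then killed directly by the rank constraint $\phi(D)\in M_n^{\le k}$. Your approach is instead a local open--closed argument on the connected set $\ca{U}$, using continuity of the entries $a_i(D)$ and of the eigenvalues of $B(D)$ to separate them into disjoint disks and force the matching $a_i(D)=\mu_i(D)$, then killing $B(D)$ via diagonalizability of $\phi(D)$ (through Lemma~\ref{le:Rk density argument}(b)). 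Both are natural; yours avoids Lemma~\ref{le:topological lemma} entirely at the cost of a small eigenvalue-perturbation argument. Two minor remarks: (i) when you appeal to Lemma~\ref{le:Rk density argument}(b) you are implicitly using that $\phi(D)$ has rank exactly $k$, which follows since the preserved characteristic polynomial already supplies $k$ non-zero eigenvalues while the codomain is $M_n^{\le k}$ --- worth stating once; (ii) once you know $a_i(D)=\mu_i(D)\ne 0$ for all $i$, the rank bound $r(\phi(D))\le k$ already forces $B(D)=0$ without the detour through diagonalizability.
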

\begin{claimproof}
  As $\phi$ preserves characteristic polynomials, the Jordan form of $\phi(\Lambda^k_n)$ looks like the block-diagonal matrix $\diag(1,\ldots,k,J)$, where $J \in M_{n-k}$ is a nilpotent Jordan matrix. Since $\phi(\Lambda^k_n) \in M_n^{\le k}$, it follows that $J= 0$. Therefore, by using a suitable conjugation matrix, without loss of generality we can assume that $\phi(\Lambda^k_n) = \Lambda^k_n$. The claim now follows by a standard argument. Indeed, fix an arbitrary $D\in (\ca{D}_n^{= k})'_{\{k+1,\ldots,n\}}$. As already observed, the set $(\ca{D}_n^{= k})'_{\{k+1,\ldots,n\}}$ is path-connected, so we can choose a continuous path 
  \begin{equation*}
  \diag(f_1, \ldots, f_k, 0, \ldots ,0) : [0,1] \to (\ca{D}_n^{= k})'_{\{k+1,\ldots,n\}}
  \end{equation*}
  such that
  \begin{equation*}
  \diag(f_1(0), \ldots, f_k(0), 0, \ldots ,0) =D \quad \text{ and } \quad \diag(f_1(1), \ldots, f_k(1), 0, \ldots ,0) =\Lambda^k_n.
  \end{equation*}
For every $t \in [0,1]$ we have
  \begin{equation*}
  \diag(f_1(t), \ldots, f_k(t), 0 , \ldots, 0)\leftrightarrow \Lambda^k_n,
  \end{equation*}
  so (by commutativity-preserving)
  \begin{equation*}
 \phi\left(\diag(f_1(t), \ldots, f_k(t), 0 , \ldots, 0)\right) \leftrightarrow \phi(\Lambda^k_n) = \Lambda^k_n.
 \end{equation*}
  Hence, 
  \begin{equation*}\phi\left(\diag(f_1(t), \ldots, f_k(t), 0 , \ldots, 0\right)) = \begin{bmatrix} \diag(*_1, \ldots, *_k) & 0 \\ 0 & *_{(n-k)\times(n-k)}\end{bmatrix},
  \end{equation*}
  where $*_1, \ldots, *_k \in \{f_1(t), \ldots, f_k(t),0\}$ (by spectrum-preserving). In view of Lemma \ref{le:topological lemma}, the map
  $$[0,1] \to M_{k}, \qquad t \mapsto \phi\left(\diag(f_1(t), \ldots, f_k(t), 0 , \ldots, 0)\right)_{\text{upper left $k\times k$ block}}$$
  is equal to one of the continuous maps
  $$t \mapsto \diag(g_1(t), \ldots, g_k(t)),$$
  where $g_1,\ldots, g_k \in \{f_1,\ldots,f_k,0\}$. The only one of these which does not contradict $\phi(\Lambda^k_n) = \Lambda^k_n$ for $t=1$ is $t \mapsto \diag(f_1(t), \ldots, f_k(t))$. Since the rank cannot increase, the lower right $(n-k)\times (n-k)$ block has to be zero. Hence, we conclude
  \begin{equation*}
  \phi\left(\diag(f_1(t), \ldots, f_k(t), 0 , \ldots, 0)\right) = \diag(f_1(t), \ldots, f_k(t), 0 , \ldots, 0), \quad \text{ for all } t \in [0,1]\end{equation*}
  and consequently, for $t=0$, $\phi(D)=D$. Now the density of $(\ca{D}_n^{= k})'_{\{k+1,\ldots,n\}}$ in $(\ca{D}_n^{\le k})_{\{k+1,\ldots,n\}}$ finishes the proof of the claim.
\end{claimproof}
\emph{In view of Claim \ref{cl:identity on diagonals}, throughout the proof of  Theorem \ref{th:sing2sing} we assume that \eqref{eq:identity on partial diagonal} holds. In particular, $\phi(E_{jj}) = E_{jj}$ for all $1 \le j \le k$.} 

\setcounter{case}{0}

\begin{claim}\label{cl:preserves-diagonalizability-pre} Without loss of generality we can further assume that 
\begin{equation*}
\phi(D) = D, \quad  \text{for all } D \in \mathcal{D}_n^{\le k}.
\end{equation*}
\end{claim}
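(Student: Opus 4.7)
The plan is to extend the identification $\phi|_{(\mathcal{D}_n^{\le k})_{\{k+1,\ldots,n\}}} = \mathrm{id}$ of Claim~\ref{cl:identity on diagonals} to all of $\mathcal{D}_n^{\le k}$ by exploiting a residual gauge freedom: replacing $\phi(\cdot)$ by $S^{-1}\phi(\cdot)S$ for any block-diagonal $S = \diag(s_1,\ldots,s_k,U)$ with $s_i \in \C^\times$ and $U \in \GL(n-k)$ centralizes $\mathcal{D}_{\{1,\ldots,k\}}$, and hence preserves Claim~\ref{cl:identity on diagonals}. Observe first that every $D \in \mathcal{D}_n^{\le k}$ commutes with $\Lambda^k_n$, so $\phi(D)$ centralizes $\phi(\Lambda^k_n)=\Lambda^k_n$; since the centralizer of $\Lambda^k_n$ in $M_n$ is $\mathcal{D}_k \oplus M_{n-k}$, we may write $\phi(D) = \diag(a_1,\ldots,a_k,A)$ with $A \in M_{n-k}$.

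The core step is a ``bootstrap path''. Fix $j \in \{1,\ldots,n-k\}$ and set $D^{(j)}(t) := \Lambda^{k-1}_n + tE_{k+j,k+j}$ for $t\in\C$. Writing $\phi(D^{(j)}(t))=\diag(a_1(t),\ldots,a_k(t),A(t))$, note that on the connected set $\C\setminus\{0,1,\ldots,k-1\}$ the spectrum $\{0,1,\ldots,k-1,t\}$ consists of pairwise distinct values. Lemma~\ref{le:topological lemma}, continuity, and the boundary value $\phi(\Lambda^{k-1}_n)=\Lambda^{k-1}_n$ then force $a_i\equiv i$ for $1\le i\le k-1$ and $a_k\in\{t\mapsto 0,\ t\mapsto t\}$ globally. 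The option $a_k(t)=t$ would yield $\phi(D^{(j)}(t))=\Lambda^{k-1}_n+tE_{kk}=\phi(\Lambda^{k-1}_n+tE_{kk})$ by Claim~\ref{cl:identity on diagonals}, contradicting injectivity for $t\neq 0$. So $a_k\equiv 0$; the rank and spectrum constraints then pin down $A(t)=tP_j(t)$, where $P_j(t)$ is a rank-$1$ idempotent in $M_{n-k}$ for each $t\neq 0$. Since the $D^{(j)}(t)$'s pairwise commute, so do the $P_j(t)$'s; as two distinct commuting rank-$1$ idempotents are orthogonal (hence bounded apart), a continuous family of mutually commuting rank-$1$ idempotents is locally constant, so $P_j(t)\equiv P_j$ for a fixed rank-$1$ idempotent $P_j\in M_{n-k}$.

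Running the analysis for each $j=1,\ldots,n-k$ produces idempotents $P_1,\ldots,P_{n-k}\in M_{n-k}$. They pairwise commute (the paths $D^{(j)}(\cdot)$ pairwise commute) and are pairwise distinct (if $P_j=P_{j'}$ the formulas for $\phi(D^{(j)}(t))$ and $\phi(D^{(j')}(t))$ coincide, forcing $E_{k+j,k+j}=E_{k+j',k+j'}$ by injectivity). Hence they are pairwise orthogonal and, being $n-k$ in number, form a complete orthogonal family in $M_{n-k}$. Picking $U\in\GL(n-k)$ which simultaneously conjugates them to the standard diagonal rank-$1$ projections of $M_{n-k}$ (composing with a permutation in $S_{n-k}\subseteq\GL(n-k)$ to handle relabeling), and replacing $\phi$ by $S^{-1}\phi(\cdot)S$ with $S:=\diag(I_k,U)$—which preserves Claim~\ref{cl:identity on diagonals}—one arrives at
\begin{equation*}
\phi(\Lambda^{k-1}_n + tE_{k+j,k+j}) = \Lambda^{k-1}_n + tE_{k+j,k+j}, \qquad t\in\C,\ j=1,\ldots,n-k.
\end{equation*}

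With $\phi$ now fixing a dense family of matrices in each neighboring component $(\mathcal{D}_n^{=k})'_{\{k,k+1,\ldots,n\}\setminus\{k+j\}}$, one iterates the Claim~\ref{cl:identity on diagonals}-style argument: on any component $(\mathcal{D}_n^{=k})'_J$ whose closure meets a previously handled stratum, path-connectedness of the component, commutativity with $\Lambda^k_n$, Lemma~\ref{le:topological lemma}, and continuity propagate $\phi=\mathrm{id}$ to the closure $(\mathcal{D}_n^{\le k})_J$. Proceeding ``one swap at a time'' between the index sets $\{1,\ldots,k\}$ and $\{k+1,\ldots,n\}$ eventually covers every $J\in[n]^{(n-k)}$, and hence all of $\mathcal{D}_n^{\le k}$. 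The hardest part is the gauge fix: a single conjugation $\diag(I_k,U)$ must simultaneously standardize all $n-k$ idempotents $P_j$, which rests crucially on the commuting-orthogonal structure deduced above; the subsequent bootstrapping is conceptually routine but notationally cumbersome.
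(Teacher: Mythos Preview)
Your proof is correct and takes a genuinely different route from the paper's. The paper argues by induction on $K$ from $k$ to $n$: at each step it considers only the single path $\Lambda_n^{k-1}+\varepsilon E_{K+1,K+1}$, applies a \emph{separate} conjugation $\diag(I_K,S)$ to normalize the resulting rank-one block $E(\varepsilon)\in M_{n-K}$ at one value of $\varepsilon$, and then runs a detailed Lemma~\ref{le:topological lemma} analysis (over all choices $R\in[K]^{(K+1-k)}$) to push $\phi=\id$ from support $[K]$ to support $[K+1]$. You instead treat all indices $k+1,\ldots,n$ at once: the observation that each $P_j(t)$ is constant in $t$ and that the resulting $P_1,\ldots,P_{n-k}$ form a complete pairwise-orthogonal family of rank-one idempotents in $M_{n-k}$ (both consequences of the ``commuting rank-one idempotents are equal or orthogonal'' dichotomy, plus injectivity) allows a \emph{single} conjugation by $\diag(I_k,U)$. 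This is more economical and conceptually cleaner; the paper's version has the compensating advantage that its bootstrapping is written out in full.

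Two small imprecisions in your last paragraph are worth flagging. First, the gauge-fixed family $\{\Lambda_n^{k-1}+tE_{k+j,k+j}:t\in\C\}$ is one-dimensional, not dense, in the $k$-dimensional component $(\mathcal D_n^{=k})'_{\{k,\ldots,n\}\setminus\{k+j\}}$; but a single fixed point there is all the Claim~\ref{cl:identity on diagonals} argument needs, and you have one. Second, ``commutativity with $\Lambda_n^k$'' alone only gives $\phi(D)=\diag(a_1,\ldots,a_k,A)$ with $A\in M_{n-k}$, not that $\phi(D)$ is fully diagonal; to pin down $A$ you must also use commutativity with the gauge-fixed matrices $\Lambda_n^{k-1}+kE_{k+j',k+j'}$ (equivalently, once the one-swap components are handled and $\phi(E_{pp})=E_{pp}$ for all $p$, commutativity with every $E_{pp}$), or else rerun your idempotent-constancy trick on the residual rank-$\le 1$ block. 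Either fix is routine.
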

\begin{claimproof} 
    We show that for each $k \le K \le n$, there exists an invertible matrix $T_K \in \GL(n)$ such that
    \begin{equation}\label{eq:inductive hypothesis K}
        \phi(D) = T_KDT_K^{-1}, \quad \text{ for all } D\in (\ca{D}_n^{\le k})_{\{K+1,\ldots, n\}}.
    \end{equation}
(Throughout, if $m>n$, we assume $\{m, \ldots , n\}$ to be the empty set.)  We prove the claim by induction on $K$. The base case $k = K$ is precisely \eqref{eq:identity on partial diagonal}, where $T_k=I$. Therefore, fix some $k \le K < n$ and suppose that \eqref{eq:inductive hypothesis K} holds for that $K$. Note that without loss of generality we can assume that $T_K = I$ as well. Indeed, by \eqref{eq:identity on partial diagonal} and \eqref{eq:inductive hypothesis K} we have
    \begin{equation*}
        D=\phi(D)=T_KDT_K^{-1}, \quad \text{ for all }D \in (\ca{D}_n^{\le k})_{{\{k+1,\ldots, n\}}}.
    \end{equation*}
    In particular, $T_K$ commutes with all such matrices $D$, so $T_K$ is of the block-diagonal form \begin{equation*}
        T_K = \diag(D', V), \quad \text{ for some $D' \in \ca{D}_k$ and $V \in M_{n-k}$}.
    \end{equation*}
    By replacing $\phi$ with the map $T_K^{-1}\phi(\cdot)T_K$, the equality \eqref{eq:identity on partial diagonal} remains true, while we also have
    \begin{equation}\label{eq:inductive hypothesis K2}
        \phi(D) = D, \quad \text{ for all } D \in (\ca{D}_n^{\le k})_{\{K+1,\ldots, n\}}.
    \end{equation}
    We wish to show that \eqref{eq:inductive hypothesis K} holds for $K+1$. For $\varepsilon \in \left[ 0,\frac12\right]$, denote 
    \begin{equation*}
    \Lambda(\varepsilon) := \Lambda_n^{k-1} + \varepsilon E_{K+1,K+1} \in (\ca{D}_n^{\le k})_{\{K+2,\ldots, n\}}
    \end{equation*}
    (if $k=1$, then $\Lambda(\varepsilon) = \varepsilon E_{K+1,K+1}$). For each $p \in [K]$, via \eqref{eq:inductive hypothesis K2} we have
\begin{equation*}
\Lambda(\varepsilon) \leftrightarrow E_{pp} \implies \phi(\Lambda(\varepsilon)) \leftrightarrow E_{pp},
\end{equation*}
so we conclude that 
\begin{equation}\label{eq:definition of psi}
\phi(\Lambda(\varepsilon)) = \diag(\psi(\varepsilon), E(\varepsilon))
\end{equation}
    for some matrices $\psi(\varepsilon) \in \ca{D}_{K}$ and $E(\varepsilon) \in M_{n-K}$ dependent on $\varepsilon$. Let $\ca{S}$ be the family of functions $\left(0,\frac12\right] \to \C$ consisting of the inclusion map, as well as the constant maps equal to $c \in \{0\}\cup  [k-1]$, and define the finite family
    \begin{equation*}
      \ca{G} := \left\{g : \left(0,\frac12\right] \to \mathcal{D}_{K} \,\Bigg|\,  \text{there exist } g_1, \ldots, g_K \in\ca{S} \text{ such that } g(\cdot) = \diag(g_1(\cdot), \ldots, g_K(\cdot))\right\}
    \end{equation*}
of continuous maps which attain distinct values at each $\varepsilon \in \left(0,\frac12\right]$.
   Consider the continuous map $\psi : \left[0,\frac12\right] \to \mathcal{D}_K$ which maps each $\varepsilon \in \left[0,\frac12\right]$ to the upper-left $K \times K$ block of $\phi(\Lambda(\varepsilon))$, as defined by \eqref{eq:definition of psi}. Since $\phi$ is spectrum-preserving, for each $\varepsilon \in \left(0,\frac12\right]$ we have $\psi(\varepsilon) = g(\varepsilon)$ for some $g \in \ca{G}$ (depending on $\varepsilon$). Consequently, by Lemma \ref{le:topological lemma}, we conclude that $\psi|_{\left(0,\frac12\right]} \in \ca{G}$, so there exist $g_1,\ldots,g_K\in\ca{S}$ such that $\psi|_{\left(0,\frac12\right]} = \diag(g_1,\ldots,g_K)$.  By invoking  \eqref{eq:inductive hypothesis K2} on the matrix $\Lambda(0)$ and using \eqref{eq:definition of psi}, we obtain
    \begin{align*}
        \Lambda(0) &= \phi(\Lambda(0)) = \lim_{\varepsilon \to 0^+} \phi(\Lambda(\varepsilon)) = \lim_{\varepsilon \to 0^+} \diag(\psi(\varepsilon), E(\varepsilon))\\ & =  \lim_{\varepsilon \to 0^+} \diag(g_1(\varepsilon), \ldots, g_K(\varepsilon), E(\varepsilon)).
    \end{align*}
    Hence, 
    $$\lim_{\varepsilon \to 0^+} g_p(\varepsilon) = \Lambda(0)_{pp} = p, \quad \text{ for all } p \in [k-1]$$
    and
    $$\lim_{\varepsilon \to 0^+} g_q(\varepsilon) = \Lambda(0)_{qq} = 0, \quad \text{ for all } k \le  q \le K.$$
            By the definition of $\ca{S}$, we conclude that for all $p \in [k-1]$, $g_p$ equals the constant map $p$, while for all $k \le q \le K $, $g_q$ is either the constant map $0$, or the inclusion map $\left(0,\frac12\right] \to \C$. Since the rank of the matrix $\phi(\Lambda(\varepsilon))$ cannot exceed $k$, we therefore see that there are exactly two options, the first being that there is some $k \le q \le K$ such that
    \begin{equation}\label{eq:first option}
      \phi(\Lambda(\varepsilon))= \Lambda(0) + \varepsilon E_{qq}, \quad \text{ for all } \varepsilon \in \left(0,\frac12\right],
    \end{equation}
   while the second is
    \begin{equation}\label{eq:second option}
      \phi(\Lambda(\varepsilon)) =\Lambda(0) + \diag(0_K,E(\varepsilon)), \quad \text{ for all } \varepsilon \in \left(0,\frac12\right],
    \end{equation}
   where $E(\varepsilon)\in M_{n-K}$ is a rank-one matrix with $\varepsilon$ being its only non-zero eigenvalue (hence it is diagonalizable). In view of  \eqref{eq:inductive hypothesis K2}, the first option \eqref{eq:first option} clearly contradicts the injectivity of $\phi$. Therefore, the second option \eqref{eq:second option} must be true. In particular, for $\varepsilon = \frac12$, let $S \in \GL(n-K)$ be an invertible matrix such that
   \begin{equation*}
       SE\left(\frac12\right)S^{-1} = \diag\left(\frac12, 0, \ldots, 0\right) \in \ca{D}_{n-K}.
   \end{equation*}
   By conjugating $\phi$ with $\diag(I_K,S)$ (so that \eqref{eq:inductive hypothesis K2}, and in particular \eqref{eq:identity on partial diagonal}, still holds), we can further assume that
    \begin{equation}\label{eq:lambda1/2}
\phi\left(\Lambda\left(\frac12\right)\right)=\Lambda\left(\frac12\right).
    \end{equation}
    Let $D \in \ca{D}_n^{\le k}$ be an arbitrary matrix supported in $[K+1]$. In view of \eqref{eq:inductive hypothesis K2}, for each $p \in [K]$ we have
    $$D \leftrightarrow E_{pp} \implies \phi(D) \leftrightarrow E_{pp}$$
    and
    \begin{equation*}
    D \leftrightarrow \Lambda\left(\frac12\right) \implies \phi(D) \leftrightarrow \Lambda\left(\frac12\right)
    \end{equation*}
    so that $\phi(D) \leftrightarrow E_{K+1,K+1}$. We conclude that there exists a diagonal matrix $\widetilde{\psi}(D) \in \ca{D}_{K+1}$, and a matrix $\widetilde{E}(D) \in M_{n-(K+1)}$ such that
    \begin{equation}\label{eq:defintion of psi tilda}
        \phi(D) = \diag(\widetilde{\psi}(D), \widetilde{E}(D))
    \end{equation}
    (if $K+1 = n$, then simply $\phi(D) = \widetilde{\psi}(D)$). This equality clearly defines a continuous map $\widetilde{\psi} : (\ca{D}_n^{\le k})_{\{K+2,\ldots, n\}} \to \ca{D}_{K+1}$. Denote \begin{equation*}
        j := K+1-k \ge 1
    \end{equation*} and let $R \in [K]^{(j)}$ be an arbitrary choice of $j$ distinct indices. Let $\widetilde{\ca{G}}_R$ denote the finite family consisting of all functions $f_\pi: (\ca{D}_n^{=k})'_{R \cup \{K+2,\ldots, n\}} \to \ca{D}_{K+1}$, parameterized by maps $\pi : [K+1] \to ([K+1]\setminus R) \cup \{0\}$, defined by
    \begin{equation*}
        f_{\pi}(D) = \diag(d_{\pi(1)}, \ldots, d_{\pi(K+1)}), \quad \text{ for all }D = \diag(d_1,\ldots,d_n) \in  (\ca{D}_n^{=k})'_{R \cup \{K+2,\ldots, n\}},
    \end{equation*}
    where $d_0$ is defined to be $0$. Note that $\widetilde{\ca{G}}_R$ satisfies the assumptions of Lemma \ref{le:topological lemma}. Indeed, clearly every function in $\widetilde{\ca{G}}_R$ is continuous. Moreover, fix some $D = \diag(d_1,\ldots,d_n) \in  (\ca{D}_n^{=k})'_{R \cup  \{K+2,\ldots, n\}}$ and let $\pi,\pi' : [K+1] \to ([K+1]\setminus R) \cup \{0\}$ be maps such that there exists some $i \in [K+1]$ with $\pi(i) \ne \pi'(i)$. But then
    $$f_{\pi}(D)_{ii} = d_{\pi(i)} \ne d_{\pi'(i)} = f_{\pi'}(D)_{ii} \implies f_{\pi}(D) \ne f_{\pi'}(D),$$
    as desired. Furthermore, as $\phi$ is spectrum-preserving, note that for any $D = \diag(d_1,\ldots,d_n) \in  (\ca{D}_n^{=k})'_{R \cup \{K+2,\ldots, n\}}$ there clearly exists a map $\pi : [K+1] \to ([K+1]\setminus R) \cup \{0\}$ such that $\widetilde{\psi}(D) = f_{\pi(D)}$. As the space $(\ca{D}_n^{=k})'_{R \cup \{K+2,\ldots, n\}}$ is (path-)connected, we can therefore apply Lemma \ref{le:topological lemma} to the continuous restriction $$\widetilde{\psi}_R := \widetilde{\psi}|_{(\ca{D}_n^{=k})'_{R \cup \{K+2,\ldots, n\}}} : (\ca{D}_n^{=k})'_{R \cup\{K+2,\ldots, n\}} \to \ca{D}_{K+1}$$ to conclude that $\widetilde{\psi}_R = f_{\pi_R} \in \widetilde{\ca{G}}_R$ for some map $\pi_R  : [K+1] \to ([K+1]\setminus R) \cup \{0\}$. Fix some $p \in [K] \setminus R$. By the density of $(\ca{D}_n^{=k})'_{R \cup\{K+2,\ldots, n\}}$ in $(\ca{D}^{\le k}_n)_{R \cup \{K+2,\ldots, n\}}$, there exists a sequence $(D_m)_{m\in \N}$ in $(\ca{D}_n^{=k})'_{R \cup\{K+2,\ldots, n\}}$ such that $\lim_{m\to\infty} D_m = E_{pp}$. Then, by \eqref{eq:inductive hypothesis K2}, we have
    \begin{align}\label{eq:limits Epp}
        1 &= (E_{pp})_{pp}= \widetilde{\psi}(E_{pp})_{pp} = \lim_{m\to\infty} \widetilde{\psi}_R(D_m)_{pp} = \lim_{m\to\infty} (D_m)_{\pi_R(p),\pi_R(p)} \\
        &= (E_{pp})_{\pi_R(p),\pi_R(p)}. \nonumber
    \end{align}
    Therefore, $\pi_R(p) = p$ for each $p \in [K]\setminus R$.
    
    
For a moment consider the particular choice $R_0:= \{k,\ldots, K\}$ (which is in fact the only possibility when $k=1$). For the  matrix $\Lambda\left(\frac12\right) \in (\ca{D}_n^{=k})'_{R_0 \cup \{K+2,\ldots, n\}}$, by \eqref{eq:lambda1/2} we know that
    \begin{equation*}
        \widetilde{\psi}_{R_0}\left(\Lambda\left(\frac12\right)\right) = \diag\left(1,\ldots,k-1,0,\ldots,0,\frac12\right),
    \end{equation*} so 
    \begin{equation*}
       \frac12 = \widetilde{\psi}_{R_0}\left(\Lambda\left(\frac12\right)\right)_{K+1,K+1} = f_{\pi_{R_0}}\left(\Lambda\left(\frac12\right)\right)_{K+1,K+1} = \Lambda\left(\frac12\right)_{\pi_{R_0}(K+1),\pi_{R_0}(K+1)}.
    \end{equation*}
    It follows that $\pi_{R_0}(K+1) = K+1$, i.e.\ the restriction $\pi_{R_0}|_{[K+1]\setminus R_0}$ acts as the identity map on $[K+1]\setminus R_0$. Therefore, 
    \begin{equation*}
        \widetilde{\psi}_{R_0}(D)_{pp} = D_{pp}, \quad \text{ for all $D \in  (\ca{D}_n^{=k})'_{R_0 \cup \{K+2,\ldots, n\}}$ and $p \in [K+1]\setminus R_0$}.
    \end{equation*}
    Since for the arbitrary matrix $D$ as above, the rank of $\phi(D) = \diag(\widetilde{\psi}(D), \widetilde{E}(D))$ cannot exceed $k$, we conclude that $\widetilde{\psi}_{R_0}(D)$ is precisely the upper-left $(K+1)\times(K+1)$ block of $D$, while $\widetilde{E}(D) = 0$. By passing to the closure, it follows that  $\phi(D) = D$ for all $D \in \ca{D}_n^{\le k}$ supported in $[k-1] \cup \{K+1\}$ (which consequently finishes the proof when $k=1$). In particular, we have
    \begin{equation}\label{eq:EK+1}
        \phi(E_{K+1, K+1}) = E_{K+1,K+1}.
    \end{equation}

     We now return to the arbitrary choice of $R$. Let $(D_m)_{m\in \N}$ be a sequence of matrices in $(\ca{D}_n^{=k})'_{R\cup \{K+2,\ldots, n\}}$ such that $\lim_{m\to\infty} D_m = E_{K+1,K+1}$. By the same reasoning as in \eqref{eq:limits Epp}, using \eqref{eq:defintion of psi tilda} and \eqref{eq:EK+1} we have
    \begin{align*}
        1 &= (E_{K+1,K+1})_{K+1,K+1}= \widetilde{\psi}(E_{K+1,K+1})_{K+1,K+1} = \lim_{m\to\infty} \widetilde{\psi}_R(D_m)_{K+1,K+1} \\
        &= \lim_{m\to\infty} (D_m)_{\pi_R(K+1),\pi_R(K+1)} = (E_{K+1,K+1})_{\pi_R(K+1),\pi_R(K+1)},
    \end{align*}
    which directly implies $\pi_R(K+1) = K+1$. It follows that $\pi_R|_{[K+1]\setminus R}$ acts as the identity map on $[K+1]\setminus R$. As in the case $R = R_0$, using the rank argument we conclude that
    $$\phi(D) = D, \quad \text{ for all }D \in (\ca{D}_n^{=k})'_{R\cup \{K+2,\ldots, n\}} \text{ and any choice of $R \in [K]^{(j)}$}.$$
    By passing to the closure, it follows
    $$\phi(D) = D, \quad \text{ for all $D \in \ca{D}_n^{\le k}$ supported in }[K+1].$$
    This finally completes the inductive step.
\end{claimproof}

The rest of the argument follows using the simplified arguments from \cite{MR4881574}. For completeness we include some details. 

\begin{claim}\label{cl:preserves-diagonalizability}
  For each $S \in \GL(n)$ there exists $T \in \GL(n)$ such that \begin{equation*}\phi(SDS^{-1}) = TDT^{-1}, \quad \text{ for all } D \in \mathcal{D}_n^{\le k}.\end{equation*}
\end{claim}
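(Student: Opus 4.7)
The plan is to reduce Claim \ref{cl:preserves-diagonalizability} to Claim \ref{cl:preserves-diagonalizability-pre} via a twisting trick. Given $S \in \GL(n)$, consider the composition
\begin{equation*}
\phi_S := \phi \circ \Ad_S : M_n^{\le k} \to M_n^{\le k}, \qquad \phi_S(X) = \phi(SXS^{-1}).
\end{equation*}
Since $\Ad_S$ is a homeomorphism of $M_n^{\le k}$ that preserves rank, spectrum and commutativity, $\phi_S$ inherits from $\phi$ all of the standing hypotheses of Theorem \ref{th:sing2sing}: it is an injective continuous commutativity-preserving and spectrum-shrinking self-map of $M_n^{\le k}$.

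The key observation is that although Claim \ref{cl:preserves-diagonalizability-pre} is phrased as a ``without loss of generality'' normalization for $\phi$, what its proof actually establishes is an intrinsic statement valid for \emph{every} map $\psi$ satisfying the hypotheses of Theorem \ref{th:sing2sing}: namely, there exists some $T_\psi \in \GL(n)$ such that $\psi(D) = T_\psi D T_\psi^{-1}$ for all $D \in \mathcal{D}_n^{\le k}$ (the normalization step itself consists of replacing $\psi$ by $\Ad_{T_\psi^{-1}} \circ \psi$). Applying this intrinsic form to $\psi := \phi_S$ produces a matrix $T \in \GL(n)$ such that
\begin{equation*}
\phi(SDS^{-1}) = \phi_S(D) = TDT^{-1}, \quad \text{ for all } D \in \mathcal{D}_n^{\le k},
\end{equation*}
which is exactly the desired conclusion.

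There is no substantive obstacle to overcome at this stage: the argument is essentially a functorial reapplication of the previously established result, the only thing worth making explicit being that the proof of Claim \ref{cl:preserves-diagonalizability-pre} nowhere uses anything about $\phi$ beyond the Theorem \ref{th:sing2sing} hypotheses, so it is legitimate to feed the twisted map $\phi_S$ into the same machinery.
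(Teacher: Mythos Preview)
Your proposal is correct and is essentially the paper's own argument: both feed the twisted map $\phi\circ\Ad_S$ back through the earlier normalization. The paper phrases it as first choosing $T$ with $\phi(S\Lambda_n^k S^{-1})=T\Lambda_n^k T^{-1}$ and then applying Claim~\ref{cl:identity on diagonals} to $T^{-1}\phi(S(\cdot)S^{-1})T$, but to obtain the conclusion on all of $\mathcal{D}_n^{\le k}$ one really needs Claim~\ref{cl:preserves-diagonalizability-pre} as well---which is precisely what you invoke, so your write-up is if anything a touch more accurate.
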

\begin{claimproof}
  For a fixed $S \in \GL(n)$ there exists $T \in \GL(n)$ such that $\phi(S\Lambda_n^k S^{-1}) = T \Lambda_n^k T^{-1}$. Now we can apply Claim \ref{cl:identity on diagonals} to the map $T^{-1}\phi(S(\cdot)S^{-1})T$ which satisfies the same properties as $\phi$, as well as $\Lambda_n^k \mapsto \Lambda_n^k$.
\end{claimproof}

\begin{claim}\label{cl:preserves zero-product}
  Let $A,B \in M_n^{\le k}$ be two diagonalizable matrices such that $A \perp B$. Then $\phi(A) \perp \phi(B)$.
\end{claim}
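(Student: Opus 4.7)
The plan is to reduce this to a direct application of Claim \ref{cl:preserves-diagonalizability} via simultaneous diagonalization. The point is that the hypothesis $A\perp B$ is much stronger than mere commutativity once both operands are diagonalizable: it forces $A$ and $B$ to live on complementary coordinate subspaces after a single common change of basis.

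First I would observe that since $AB=BA=0$ in particular implies $AB=BA$, and both $A$ and $B$ are diagonalizable, the matrices $A,B$ are simultaneously diagonalizable. Pick $S\in\GL(n)$ and $D_A,D_B\in\ca{D}_n$ with
\begin{equation*}
A = S D_A S^{-1}, \qquad B = S D_B S^{-1}.
\end{equation*}
Because $\rk(A),\rk(B)\le k$, we automatically have $D_A,D_B\in\ca{D}_n^{\le k}$. The condition $AB=0$ translates into $D_A D_B = 0$, i.e.\ the supports of the diagonal entries of $D_A$ and $D_B$ are disjoint; symmetrically $D_B D_A=0$. In short, $D_A\perp D_B$ in $\ca{D}_n^{\le k}$.

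Next, I would apply Claim \ref{cl:preserves-diagonalizability} to the matrix $S$ selected above: there exists $T\in\GL(n)$ such that $\phi(SDS^{-1})=TDT^{-1}$ for every $D\in\ca{D}_n^{\le k}$. Specialising to $D=D_A$ and $D=D_B$ gives
\begin{equation*}
\phi(A)=T D_A T^{-1}, \qquad \phi(B)=T D_B T^{-1}.
\end{equation*}
Since $D_A D_B = D_B D_A = 0$, conjugating by $T$ yields $\phi(A)\phi(B)=T(D_A D_B)T^{-1}=0$ and $\phi(B)\phi(A)=T(D_B D_A)T^{-1}=0$, i.e.\ $\phi(A)\perp\phi(B)$.

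There is essentially no obstacle here beyond unpacking Claim \ref{cl:preserves-diagonalizability}; the mild subtlety is making sure one uses the same invertible matrix $S$ for both $A$ and $B$ (which is precisely what simultaneous diagonalization of commuting diagonalizable matrices provides), so that the $T$ produced by Claim \ref{cl:preserves-diagonalizability} can be applied simultaneously to the images $\phi(A)$ and $\phi(B)$.
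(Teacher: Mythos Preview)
Your argument is correct and is exactly the intended unpacking of the paper's one-line proof, which simply says the claim follows directly from Claim~\ref{cl:preserves-diagonalizability}. The simultaneous diagonalization step and the use of a single $T$ for both $\phi(A)$ and $\phi(B)$ are precisely the details being suppressed there.
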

\begin{claimproof}
  Follows directly from Claim \ref{cl:preserves-diagonalizability}.
\end{claimproof}

\begin{claim}\label{cl:phi is homogeneous}\phantom{x}
  \begin{enumerate}[(a)]
  \item Let $A,B \in M_n^{\le k}$ be diagonalizable matrices such that $A \leftrightarrow B$. Then $\phi(\alpha A + \beta B) = \alpha \phi(A) + \beta\phi(B)$ for all $\alpha,\beta \in \C$.
  \item $\phi$ is a homogeneous map.
  \end{enumerate}
\end{claim}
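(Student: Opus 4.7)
The plan is to derive (a) from Claim \ref{cl:preserves-diagonalizability} via simultaneous diagonalization, and then bootstrap (b) from (a) using continuity of $\phi$ together with the density of diagonalizable matrices in $M_n^{\le k}$.

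For part (a): since $A,B \in M_n^{\le k}$ are diagonalizable and satisfy $A \leftrightarrow B$, they admit a common eigenbasis, so there exist $S \in \GL(n)$ and $D_A, D_B \in \ca{D}_n^{\le k}$ (rank is invariant under similarity) with $A = SD_A S^{-1}$ and $B = SD_B S^{-1}$. Claim \ref{cl:preserves-diagonalizability} supplies $T \in \GL(n)$ with $\phi(SDS^{-1}) = TDT^{-1}$ for every $D \in \ca{D}_n^{\le k}$. Whenever $\alpha A + \beta B \in M_n^{\le k}$ (so that the left-hand side is defined), equivalently $\alpha D_A + \beta D_B \in \ca{D}_n^{\le k}$, and hence
\[
\phi(\alpha A + \beta B) = \phi\bigl(S(\alpha D_A + \beta D_B)S^{-1}\bigr) = T(\alpha D_A + \beta D_B)T^{-1} = \alpha \phi(A) + \beta \phi(B),
\]
using $\C$-linearity of $D \mapsto TDT^{-1}$.

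For part (b): the zero matrix lies in $\ca{D}_n^{\le k}$, so by Claim \ref{cl:preserves-diagonalizability-pre} we have $\phi(0) = 0$. If $A \in M_n^{\le k}$ is diagonalizable, the pair $A,0$ is simultaneously diagonalizable and $\alpha A + 0 = \alpha A \in M_n^{\le k}$, so part (a) applied with $B=0$ gives $\phi(\alpha A) = \alpha \phi(A)$ for every $\alpha \in \C$. For arbitrary $A \in M_n^{\le k}$ I approximate: by Lemma \ref{le:Rk density argument}, diagonalizable matrices with pairwise distinct non-zero eigenvalues are dense in $M_n^{=k}$, which in turn is dense in $M_n^{\le k}$ via a standard rank-decomposition perturbation (write a rank-$j$ matrix as $U\diag(I_j,0)V$ with $U,V \in \GL(n)$, and insert an additional $\epsilon$ on the diagonal to produce a rank-$k$ approximant). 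Choosing a sequence $(A_m)$ of diagonalizable members of $M_n^{\le k}$ with $A_m \to A$, we have $\alpha A_m \in M_n^{\le k}$ and $\alpha A_m \to \alpha A$, so continuity of $\phi$ combined with the diagonalizable case yields
\[
\phi(\alpha A) = \lim_m \phi(\alpha A_m) = \lim_m \alpha \phi(A_m) = \alpha \phi(A).
\]

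No step presents a serious obstacle beyond bookkeeping; the main care is ensuring $\alpha A + \beta B$ stays inside the domain $M_n^{\le k}$ of $\phi$ in part (a), and invoking the density of diagonalizable matrices in $M_n^{\le k}$ for the closing continuity argument in part (b).
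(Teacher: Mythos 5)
Your proposal is correct and follows essentially the approach the paper intends: simultaneous diagonalization of the commuting diagonalizable pair $A,B$ reduces (a) to Claim~\ref{cl:preserves-diagonalizability} and the linearity of conjugation, and (b) follows by taking $B=0$ (so $\phi(0)=0$ via Claim~\ref{cl:preserves-diagonalizability-pre}) together with density of diagonalizable matrices in $M_n^{\le k}$ (Lemma~\ref{le:Rk density argument}) and continuity. You are also right to flag the implicit requirement $\alpha A+\beta B\in M_n^{\le k}$ in (a), which is needed for the left-hand side to be defined.
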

\begin{claimproof}
  (a) follows directly from  and Claim \ref{cl:preserves-diagonalizability}, while (b) follows from (a), Lemma \ref{le:Rk density argument} and the continuity of $\phi$.
\end{claimproof}

\begin{claim}\label{cl:preserves rank-one}
  $\phi$ is a rank-decreasing map. In particular, $\phi(M_n^{=1}) \subseteq M_n^{=1}$.
\end{claim}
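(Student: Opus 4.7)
The plan is to combine Claim \ref{cl:preserves-diagonalizability} with a density-plus-semicontinuity argument, which is the natural way to leverage the ``conjugation on diagonal orbits'' structure we have so far built up. First, I would handle the case when $X\in M_n^{\le k}$ is diagonalizable by writing $X = SDS^{-1}$ with $S \in \GL(n)$ and $D \in \ca{D}_n^{\le k}$; Claim \ref{cl:preserves-diagonalizability} then produces $T \in \GL(n)$ with $\phi(X) = TDT^{-1}$, so $r(\phi(X)) = r(D) = r(X)$.

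Next, for an arbitrary $X \in M_n^{=r}$ with $0 \le r \le k$, Lemma \ref{le:Rk density argument}(a) supplies a sequence $X_m \to X$ whose terms all have $r$ pairwise distinct nonzero eigenvalues; part (b) of the same lemma guarantees each $X_m$ is diagonalizable. The diagonalizable case above yields $r(\phi(X_m)) = r$ for every $m$, and continuity of $\phi$ gives $\phi(X_m)\to\phi(X)$. The lower semicontinuity of rank on $M_n$ then delivers
\begin{equation*}
r(\phi(X)) \;\le\; \liminf_{m\to\infty} r(\phi(X_m)) \;=\; r \;=\; r(X),
\end{equation*}
which is the rank-decreasing property.

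For the ``in particular'' assertion, note that homogeneity of $\phi$ (Claim \ref{cl:phi is homogeneous}(b)) with $\alpha=0$ forces $\phi(0) = 0$, so injectivity rules out $\phi(X) = 0$ whenever $X \neq 0$. For $X \in M_n^{=1}$ the rank-decreasing property gives $r(\phi(X)) \le 1$, so combined with $\phi(X) \ne 0$ we obtain $\phi(X) \in M_n^{=1}$.

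I do not expect a serious obstacle here: once the rank is pinned down on the dense diagonalizable stratum through Claim \ref{cl:preserves-diagonalizability}, the passage to arbitrary $X$ is a textbook semicontinuity argument. The only item of bookkeeping is verifying that the approximating sequence from Lemma \ref{le:Rk density argument}(a) consists of diagonalizable matrices, but that is precisely what part (b) of that lemma records.
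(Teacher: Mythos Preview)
Your proposal is correct and follows essentially the same approach as the paper's proof: rank is preserved on diagonalizable matrices via Claim \ref{cl:preserves-diagonalizability}, then density (Lemma \ref{le:Rk density argument}) together with continuity of $\phi$ and lower semicontinuity of rank handle the general case. Your treatment of the ``in particular'' clause via $\phi(0)=0$ and injectivity is a bit more explicit than the paper's, but the idea is the same (and you could equally well have cited Claim \ref{cl:preserves-diagonalizability-pre} directly for $\phi(0)=0$).
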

\begin{claimproof}
  In view of Claim \ref{cl:preserves-diagonalizability}, $\phi$ in fact preserves the rank of diagonalizable matrices. On the other hand, every matrix in $M_n^{\le k}$ is, by Lemma \ref{le:Rk density argument}, a limit of a sequence of diagonalizable matrices with the same rank. Now the claim follows from the continuity of $\phi$ and the lower semicontinuity of the rank. 

\end{claimproof}

\begin{claim}\label{cl:n=3}
  The theorem is true when $n = 3$.
\end{claim}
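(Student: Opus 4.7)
My plan, following the spirit of \cite{MR4881574}, is to use the structural consequences of the preceding claims---$\phi$ fixes each element of $\mathcal{D}_n^{\le k}$ (Claim \ref{cl:preserves-diagonalizability-pre}), preserves rank (Claim \ref{cl:preserves rank-one}), preserves two-sided zero-products among diagonalizable matrices (Claim \ref{cl:preserves zero-product}), and is additive on commuting diagonalizable pairs (Claim \ref{cl:phi is homogeneous})---in order to first determine $\phi$ on the off-diagonal matrix units $E_{ij}$, and then extend to all of $M_3^{\le k}$ by homogeneity and continuity.

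For the first step, I would observe that for $i \ne j$ the matrix $E_{ij}$ commutes with $E_{\ell\ell}$ for the unique $\ell \in \{1,2,3\} \setminus \{i,j\}$; since $\phi$ preserves commutativity and fixes $E_{\ell\ell}$, $\phi(E_{ij})$ must commute with $E_{\ell\ell}$, and rank plus characteristic-polynomial preservation further force $\phi(E_{ij})$ to be a rank-one nilpotent supported in the $\{i,j\}$-block. The two-sided zero-product relations $E_{ij} \perp E_{i\ell}$ and $E_{ij} \perp E_{\ell j}$, propagated via Claim \ref{cl:preserves zero-product}, impose joint constraints on the pairs $(\phi(E_{ij}), \phi(E_{i\ell}))$ and $(\phi(E_{ij}), \phi(E_{\ell j}))$. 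A direct case analysis using the rank-one nilpotent shape resolves these: either $\phi(E_{ij}) = \alpha_{ij} E_{ij}$ for \emph{every} $i \ne j$, or $\phi(E_{ij}) = \beta_{ij} E_{ji}$ for \emph{every} $i \ne j$ (mixing the two branches violates at least one zero-product constraint). Composing with the transpose if needed and then conjugating by a suitable diagonal matrix---both operations preserving all prior structural properties---one may normalize $\phi(E_{ij}) = E_{ij}$ for all $i \ne j$.

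For the second step, I would invoke Claim \ref{cl:preserves-diagonalizability} to obtain, for each $S \in \GL(3)$, a matrix $T_S \in \GL(3)$ with $\phi(SDS^{-1}) = T_S D T_S^{-1}$ for all $D \in \mathcal{D}_3^{\le k}$. Using the normalization $\phi(E_{ii}) = E_{ii}$ and the ambiguity in $T_S$ (determined only up to right-multiplication by a diagonal matrix), combined with a continuity argument along paths $S_t$ joining $I$ to arbitrary $S \in \GL(3)$, one would show that $T_S$ can be selected to equal $S$ itself. Consequently $\phi(SDS^{-1}) = SDS^{-1}$ for every diagonalizable matrix in $M_3^{\le k}$. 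Since the diagonalizable rank-$k$ matrices are dense in $M_3^{\le k}$ by Lemma \ref{le:Rk density argument}(a), continuity of $\phi$ upgrades this to $\phi = \mathrm{id}$ on all of $M_3^{\le k}$.

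The main obstacle I anticipate is the subcase $k = 1$: commuting pairs in $M_3^{\le 1}$ are severely restricted (essentially proportional or mutually zero-producting), so the additivity afforded by Claim \ref{cl:phi is homogeneous}(a) is much weaker than in the $k = 2$ case. The continuity/deformation argument of the second step must therefore be carried out more delicately, potentially parametrizing rank-one matrices $uv^t$ along real-analytic paths in the projective variety $M_3^{=1}$ in order to propagate the normalization from matrix units to generic rank-one matrices, all while verifying that the selection of $T_S$ depends coherently on $S$.
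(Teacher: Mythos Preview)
Your proposal takes a different route from the paper. The paper disposes of the $n=3$ case in one line by citing \cite[Section~2]{PetekSemrl}, observing only that those arguments are carried out entirely within $M_3^{\le 1}$ and that injectivity of $\phi$ here replaces the rank-one-preservation hypothesis needed there to ensure $\phi(E_{ij})\ne 0$. Your plan instead tries to rebuild the $n=3$ case from Claims \ref{cl:preserves-diagonalizability-pre}--\ref{cl:preserves rank-one}.

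There are two concrete problems. First, you invoke Claim \ref{cl:preserves zero-product} on the pairs $E_{ij}\perp E_{i\ell}$ and $E_{ij}\perp E_{\ell j}$ with $i,j,\ell$ pairwise distinct; but those matrix units are nilpotent, not diagonalizable, so that claim does not apply. One can substitute plain commutativity preservation (since $E_{ij}\leftrightarrow E_{i\ell}$), but then the case analysis is subtler than you indicate: a rank-one nilpotent supported in a $2\times 2$ block is \emph{not} automatically a scalar multiple of a matrix unit (e.g.\ $\left(\begin{smallmatrix}1&-1\\1&-1\end{smallmatrix}\right)$), so extracting the global dichotomy $\phi(E_{ij})\parallel E_{ij}$ versus $\phi(E_{ij})\parallel E_{ji}$ requires playing several commutativity constraints against one another rather than a single zero-product relation.

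Second, and more seriously, the continuity/deformation argument in your second step is not substantiated. You want to select $T_{S_t}$ continuously along a path $S_t$ with $T_{S_0}=I$ and conclude $T_{S_1}=S_1$ (up to diagonal); but $T_S$ is determined only up to right multiplication by a diagonal matrix, and nothing in your outline forces the continuous selection to track $S_t$ rather than drift away. The paper's eventual mechanism for pinning down $T_S$ (Claims \ref{cl:identity on R} and \ref{cl:penultimate step}) passes through Claim \ref{cl:preserves orthogonality} and an explicit invocation of the $n=3$ case via Claim \ref{cl:restriction of phi}, so that route is circular here. You correctly flag the $k=1$ difficulty, but do not resolve it; this is exactly where the hands-on computations of \cite{PetekSemrl} are doing work that your sketch does not replace.
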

\begin{proof}
  This follows from the same arguments in \cite[Section~2]{PetekSemrl}, which are conducted entirely within $M_{3}^{\leq 1}$. Indeed, the rank-one preservation assumption in \cite{PetekSemrl} serves to preclude the case of $\phi(E_{ij}) = 0$ for some matrix unit $E_{ij}$, while here this is a consequence of injectivity.
\end{proof}

\begin{claim}\label{cl:preserves orthogonality}
  Suppose that matrices $A_1,A_2 \in M_n^{\le 1}$ satisfy $A_1 \perp A_2$. Then $\phi(A_1) \perp \phi(A_2)$.
\end{claim}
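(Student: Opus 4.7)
The plan is to parametrize each non-zero $A\in M_n^{\le 1}$ as $A=xy^t$, which reduces the assumption $A_1\perp A_2$ with $A_i=x_iy_i^t$ to $y_1^tx_2=y_2^tx_1=0$, while $A_i$ is diagonalizable precisely when $y_i^tx_i\neq 0$. (The case where one of the $A_i$ vanishes is immediate since $\phi(0)=0$ by homogeneity.) When both $A_i$ are diagonalizable, the conclusion is already given by Claim~\ref{cl:preserves zero-product}, so the substance of the proof is to reduce every orthogonal pair to this case by a small perturbation and then invoke continuity of $\phi$.

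I would split the argument on whether $A_1$ and $A_2$ are scalar multiples. If $A_2=cA_1$ with $c\in\C^\times$, then $A_1\perp A_2$ immediately gives $A_1^2=0$, so $A_1$ is nilpotent rank-one. By Theorem~\ref{th:sing}\ref{item:th:sing:inj.bis} and Claim~\ref{cl:preserves rank-one}, $\phi(A_1)\in M_n^{=1}$ is nilpotent as well, hence $\phi(A_1)^2=0$; homogeneity (Claim~\ref{cl:phi is homogeneous}) then gives $\phi(A_2)=c\phi(A_1)$, whence $\phi(A_1)\perp\phi(A_2)$.

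If $A_1\not\parallel A_2$, then $x_1\not\parallel x_2$ or $y_1\not\parallel y_2$ (otherwise $x_2=\alpha x_1$ and $y_2=\beta y_1$ would force $A_2=\alpha\beta A_1$). I treat the first subcase; the second is completely analogous with the roles of $x$ and $y$ swapped. Assuming $x_1\not\parallel x_2$, pick vectors $z_1,z_2\in\C^n$ with $z_1^tx_2=z_2^tx_1=0$ and $z_1^tx_1,z_2^tx_2\neq 0$ (such $z_i$ exist because $x_1\not\parallel x_2$), and set $A_i(\epsilon):=x_i(y_i+\epsilon z_i)^t$. A direct check shows that for all sufficiently small $\epsilon>0$ the matrices $A_i(\epsilon)$ are diagonalizable, lie in $M_n^{=1}$, and still satisfy $A_1(\epsilon)\perp A_2(\epsilon)$. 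Applying Claim~\ref{cl:preserves zero-product} to this perturbed pair and sending $\epsilon\to 0^+$ yields the result by continuity.

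The main obstacle I anticipate is designing the right case-split: splitting merely by diagonalizability is too coarse, since when both $A_i$ are nilpotent with $x_1\parallel x_2$, no perturbation of the $y$-components alone produces a diagonalizable pair with preserved orthogonality. The dichotomy above circumvents this, because the simultaneous obstruction $x_1\parallel x_2$ (to $y$-perturbation) together with $y_1\parallel y_2$ (to $x$-perturbation) is exactly the condition $A_1\parallel A_2$, which the proportional case handles directly.
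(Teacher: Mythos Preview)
Your argument is correct. The paper itself does not give a proof but simply cites \cite[Claim~7]{MR4881574}; your perturbation-to-diagonalizable approach, combined with Claim~\ref{cl:preserves zero-product} and continuity, is precisely the natural strategy and almost certainly matches the cited argument. The one point worth making explicit is your dichotomy $A_1\parallel A_2$ versus $A_1\not\parallel A_2$: as you correctly observe, this is exactly what is needed to guarantee that at least one of the two perturbation schemes (in the $y$-variables when $x_1\not\parallel x_2$, or in the $x$-variables when $y_1\not\parallel y_2$) is available, and the residual proportional case is handled cleanly via homogeneity and the fact that $\phi$ sends rank-one nilpotents to rank-one nilpotents.
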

\begin{claimproof}
  This follows exactly as in \cite[Claim 7]{MR4881574}.

\end{claimproof}

\begin{claim}\label{cl:parallel}
  We have \begin{equation*}\phi(E_{ij}) \parallel E_{ij}, \quad \text{ for all } (i,j) \in [n]^2\end{equation*} or \begin{equation*}\phi(E_{ij}) \parallel E_{ji},\quad  \text{ for all } (i,j) \in [n]^2.\end{equation*}
\end{claim}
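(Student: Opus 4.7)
The plan is to use the preservation results already established, in particular the rank-one orthogonality of Claim~\ref{cl:preserves orthogonality}, to pin down the shape of each $\phi(E_{ij})$ within a narrow parametric family, and then leverage the orthogonality relations among matrix units in two ways (same row vs.\ same column) to force a global dichotomy. Fix $i\ne j$. Since $\phi$ preserves characteristic polynomials and rank on $M_n^{=1}$ (Claim~\ref{cl:preserves rank-one}), $\phi(E_{ij})$ is rank one and nilpotent, so factors as $\phi(E_{ij})=u_{ij}v_{ij}^t$ with $u_{ij},v_{ij}\ne 0$ and $v_{ij}^tu_{ij}=0$. Applying Claim~\ref{cl:preserves orthogonality} to the orthogonal pairs $E_{ij}\perp E_{pp}$ ($p\notin\{i,j\}$), together with $\phi(E_{pp})=E_{pp}$ from Claim~\ref{cl:preserves-diagonalizability-pre}, forces the $p$-th coordinates of both $u_{ij}$ and $v_{ij}$ to vanish. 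One may therefore write $u_{ij}=a_{ij}e_i+b_{ij}e_j$ and $v_{ij}=c_{ij}e_i+d_{ij}e_j$, subject to $a_{ij}c_{ij}+b_{ij}d_{ij}=0$.

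The first main step is a row-wise dichotomy extracted from Claim~\ref{cl:preserves orthogonality} applied to $E_{ij}\perp E_{il}$ for distinct $j,l\in [n]\setminus\{i\}$; a direct bookkeeping yields $c_{ij}a_{il}=0$ and $c_{il}a_{ij}=0$. Fixing $i$ and running over $j,l$ leaves only two logical possibilities: \emph{either} $c_{ij}=0$ for every $j\ne i$, in which case nilpotency and the nonvanishing of $u_{ij},v_{ij}$ force $b_{ij}=0$ and consequently $\phi(E_{ij})\parallel E_{ij}$ for every $j\ne i$ (``Case A at $i$''); \emph{or} some $c_{ij_0}\ne 0$, whereupon $a_{il}=0$ for every $l\ne j_0,i$, and tracing through nilpotency once more forces $\phi(E_{ij})\parallel E_{ji}$ for every $j\ne i$ (``Case B at $i$'').

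To upgrade this to a global statement I would exploit the column orthogonalities $E_{ij}\perp E_{kj}$ ($k\ne i$, both $\ne j$), which by Claim~\ref{cl:preserves orthogonality} supply $d_{ij}b_{kj}=0$ and $d_{kj}b_{ij}=0$. Suppose for contradiction that Case A holds at some $i_1$ and Case B at some $i_2\ne i_1$. Case A at $i_1$ gives $d_{i_1 i_2}\ne 0$, so $b_{k i_2}=0$ for every $k\notin\{i_1,i_2\}$; since Case B at such a $k$ would demand $b_{k i_2}\ne 0$, Case A must hold at \emph{every} such $k$. Symmetrically, Case B at $i_2$ gives $b_{i_2 i_1}\ne 0$, whence $d_{k i_1}=0$ for every $k\notin\{i_1,i_2\}$, in direct contradiction with the conclusion $d_{k i_1}\ne 0$ of Case A at $k$. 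Because $n\ge 3$ there is indeed such an auxiliary $k$, so Case A at one index forces Case A at every index (and likewise for B). Combined with the trivial diagonal statement $\phi(E_{ii})=E_{ii}\parallel E_{ii}$ from Claim~\ref{cl:preserves-diagonalizability-pre}, this yields the desired global dichotomy over $(i,j)\in[n]^2$.

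The main obstacle I anticipate is precisely this last globalisation step. The row dichotomy is essentially forced once the support constraint and nilpotency are in hand; what requires more care is that stitching the choices across different rows relies on two distinct families of orthogonality relations (rows vs.\ columns) and on tracking the nonvanishing of specific matrix coordinates in each scenario. It is exactly in this bookkeeping that the hypothesis $n\ge 3$ is used, via the availability of a third index $k$ disjoint from $\{i_1,i_2\}$.
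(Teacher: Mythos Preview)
Your proposal is correct and follows essentially the same approach as the paper: restrict the support of $\phi(E_{ij})$ via orthogonality with the diagonal idempotents, then use the row orthogonalities $E_{ij}\perp E_{il}$ and the column orthogonalities $E_{ij}\perp E_{kj}$ (both through Claim~\ref{cl:preserves orthogonality}) together with nilpotency to obtain first a row-wise dichotomy and then globalise it. Your coordinate bookkeeping with $u_{ij},v_{ij}$ makes explicit what the paper outsources to \cite[Claim~8]{MR4881574}; note only that $n\ge 3$ is already needed in your Case~B row step (to close the loop on $j_0$ via an auxiliary index $l$), not just in the final globalisation.
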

\begin{claimproof}
  For each $(i,j) \in [n]^2$ we have $E_{ij}\perp E_{kk}$ for all $k \in [n]\setminus \{i,j\}$ so by Claim \ref{cl:preserves orthogonality} we obtain \begin{equation*}\supp \phi(E_{ij}) \subseteq \{i,j\} \times \{i,j\}.\end{equation*}
  Suppose that $(i,j),(i,k) \in [n]^2\setminus \{(1,1),\ldots,(n,n)\}$ for $j \ne k$. By arguments similar to \cite[Claim 8]{MR4881574}, it is not difficult to show that we have either
  \begin{equation*}
    \phi(E_{ij}) \parallel E_{ij} \quad \text{ and }  \quad \phi(E_{ik}) \parallel E_{ik}
  \end{equation*}
  or
  \begin{equation*}
    \phi(E_{ij}) \parallel E_{ji}  \quad \text{ and }  \quad \phi(E_{ik}) \parallel E_{ki}.
  \end{equation*}

  \smallskip

  \noindent Now suppose $(i,j),(k,j) \in [n]^2\setminus \{(1,1), \ldots , (n,n)\}$ for $i \ne k$. By Claim \ref{cl:preserves orthogonality} we have
  \begin{equation*}
    E_{ij} \perp E_{kj} \implies \phi(E_{ij}) \perp \phi(E_{kj})
  \end{equation*}
  so we conclude that either
  \begin{equation*}
    \phi(E_{ij}) \parallel E_{ij} \quad \text{ and } \quad \phi(E_{kj}) \parallel E_{kj}
  \end{equation*}
  or
  \begin{equation*}
    \phi(E_{ij}) \parallel E_{ji} \quad \text{ and } \quad \phi(E_{kj}) \parallel E_{jk}.
  \end{equation*}
  Therefore, $\phi$ behaves the same way on all matrix units which are contained in the same row or in the same column. Now it is easy to conclude that $\phi$ behaves the same way globally on all matrix units, which is what we wanted to prove.
\end{claimproof}

By passing to the map $\phi(\cdot)^t$ if necessary, we can assume that
\begin{equation*}\phi(E_{ij}) \parallel E_{ij}, \quad \text{ for all } (i,j) \in [n]^2.\end{equation*}
In view of Claim \ref{cl:parallel}, for each $(i,j) \in [n]^2$, denote by $g(i,j) \in \C^\times$ the unique scalar such that \begin{equation*}\phi(E_{ij}) = g(i,j)E_{ij} \quad\text{ or }\quad\phi(E_{ij}) = g(i,j)E_{ji}.\end{equation*}
As, by assumption, $\phi|_{\ca{D}_n^{\le k}}$ is the identity map, it is immediate that $g|_{\{(1,1), \ldots , (n,n)\}} \equiv 1$.

\smallskip

\noindent Following the notation established in \cite{MR4881574}, for  $S \subseteq [n]$ and $A \in M_n$, we define the following auxiliary notation:
\begin{itemize}[wide]
\item $M_n^{\subseteq S}:= \{X \in M_n \ : \ \supp X \subseteq S\times S\}$.
\item When $S \ne [n]$, denote by $A^{\flat S} \in M_{n - \abs{S}}$ the matrix obtained from $A$ by deleting all rows $i$ and columns $j$ where $i,j \in S$. We also formally allow $A^{\flat \emptyset} = A$.
\item Denote by $A^{\sharp S} \in M_{n + \abs{S}}$ the matrix obtained from $A$ by adding zero rows and columns so that $(A^{\sharp S})^{\flat S} = A$.
\end{itemize}
By using block-matrix multiplication, it is not difficult to verify that $M_n^{\subseteq S}$ is a subalgebra of $M_n$, and that $(\cdot)^{\flat ([n] \setminus S)} : M_n^{\subseteq S} \to M_{\abs{S}}$ and $(\cdot)^{\sharp S} : M_n \to M_{n + \abs{S}}$ are algebra monomorphisms (see \cite[Lemmas~2.4 and 2.5]{MR4881574}).  We also extend the notation $(\cdot)^{\flat S}$ and $(\cdot)^{\sharp S}$ to sets of matrices by applying the respective operation element-wise.

\begin{claim}\label{cl:phi preserves support} 
  Suppose that $X \in M_n^{\le k}$ satisfies $X \in M_n^{\subseteq S}$ for some $S \subseteq [n]$. Then $\phi(X) \in M_n^{\subseteq S}$.
\end{claim}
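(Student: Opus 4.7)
The plan is to exploit zero-product preservation against the diagonal idempotents $E_{jj}$ for $j\in [n]\setminus S$, first in the diagonalizable case and then extending by density inside the subalgebra $M_n^{\subseteq S}$.

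First I would dispatch the case where $X$ is diagonalizable. Since $X\in M_n^{\subseteq S}$, both the $j$-th row and the $j$-th column of $X$ vanish for every $j\in [n]\setminus S$, so $X\perp E_{jj}$. Because $E_{jj}\in M_n^{\le 1}\subseteq M_n^{\le k}$ is diagonalizable and $\phi(E_{jj})=E_{jj}$ by Claim \ref{cl:preserves-diagonalizability-pre}, Claim \ref{cl:preserves zero-product} yields $\phi(X)\perp E_{jj}$, i.e.\ $\phi(X)E_{jj}=E_{jj}\phi(X)=0$, forcing the $j$-th row and column of $\phi(X)$ to vanish. Letting $j$ range over $[n]\setminus S$ gives $\phi(X)\in M_n^{\subseteq S}$.

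For general $X\in M_n^{\le k}\cap M_n^{\subseteq S}$, I would pass through a density argument inside $M_n^{\subseteq S}$. The map $(\cdot)^{\flat([n]\setminus S)}$ identifies $M_n^{\subseteq S}$ with $M_{|S|}$ and preserves rank, so $X^{\flat([n]\setminus S)}\in M_{|S|}^{=r(X)}$ with $r(X)\le k$. Lemma \ref{le:Rk density argument}(a), applied in $M_{|S|}$ at rank $r(X)$, expresses this matrix as a limit of diagonalizable matrices of rank $r(X)$; lifting these back via $(\cdot)^{\sharp([n]\setminus S)}$ produces a sequence $(X_m)$ of diagonalizable matrices in $M_n^{\subseteq S}\cap M_n^{=r(X)}\subseteq M_n^{\le k}$ with $X_m\to X$. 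The previous paragraph gives $\phi(X_m)\in M_n^{\subseteq S}$ for every $m$, and continuity of $\phi$ together with closedness of the linear subspace $M_n^{\subseteq S}\subseteq M_n$ delivers $\phi(X)\in M_n^{\subseteq S}$.

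No serious obstacle arises: the diagonalizable case is essentially a direct invocation of zero-product preservation, and the general case is then routine density-plus-continuity, using that Lemma \ref{le:Rk density argument}(a) supplies diagonalizable approximants of any prescribed rank (hence preserving membership in $M_n^{\le k}$).
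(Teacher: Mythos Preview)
Your proposal is correct and follows essentially the same approach as the paper: reduce to the diagonalizable case via Lemma \ref{le:Rk density argument} applied to $X^{\flat([n]\setminus S)}\in M_{|S|}^{\le k}$ and lifted back through $(\cdot)^{\sharp([n]\setminus S)}$, then in the diagonalizable case invoke $X\perp E_{jj}$ for $j\in[n]\setminus S$, Claim \ref{cl:preserves zero-product}, and $\phi(E_{jj})=E_{jj}$. The only difference is presentational (you treat the diagonalizable case first and then pass to the limit, whereas the paper reduces by density first), and you are slightly more explicit about why the approximants remain in $M_n^{\le k}$.
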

\begin{claimproof}
  By applying Lemma \ref{le:Rk density argument} to the matrix $X^{\flat ([n] \setminus S)} \in M_{\abs{S}}^{\le k}$, we can approximate it by a diagonalizable matrix $Y\in M_{\abs{S}}^{\le k}$ of the same rank. Then $Y^{\sharp([n] \setminus S)} \in M_n^{\le k} \cap M_n^{\subseteq S}$ approximates $X$ and is diagonalizable. In view of the continuity of $\phi$ it therefore suffices to assume that $X$ itself is already diagonalizable. Then the assertion follows from $X \perp E_{kk}$ for all $k \in [n]\setminus S$,  Claim \ref{cl:preserves zero-product} and the fact that $\phi(E_{kk}) = E_{kk}$.
\end{claimproof}

\begin{claim}\label{cl:restriction of phi}
  Let $S \subseteq [n]$ be a nonempty set. The map
  \begin{equation*}\psi : (M_n^{\le k})^{\flat ([n] \setminus S)} = M_{\abs{S}}^{\le k} \to M_{\abs{S}}^{\le k}, \qquad  X \mapsto \phi(X^{\sharp ([n] \setminus S)})^{\flat ([n] \setminus S)}\end{equation*}
  is an injective continuous commutativity and spectrum preserver.
\end{claim}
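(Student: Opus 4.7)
The plan is to verify each of the four claimed properties of $\psi$ separately (well-definedness plus continuity, injectivity, commutativity preservation, and spectrum preservation), exploiting two structural observations: first, that $(\cdot)^{\sharp([n]\setminus S)}$ and the restriction $(\cdot)^{\flat([n]\setminus S)}|_{M_n^{\subseteq S}}$ are mutually inverse algebra isomorphisms between $M_{|S|}$ and $M_n^{\subseteq S}$ (preserving rank in particular), and second, that by Claim \ref{cl:phi preserves support}, $\phi$ maps $M_n^{\le k}\cap M_n^{\subseteq S}$ into $M_n^{\subseteq S}$. Together these ensure that all the operations in the defining formula of $\psi$ are compatible with the algebraic structure and stay inside the relevant subspaces.

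I would first treat well-definedness and continuity together: $X^{\sharp([n]\setminus S)}$ sits in $M_n^{\le k}\cap M_n^{\subseteq S}$ since padding with zero rows and columns does not affect the rank, Claim \ref{cl:phi preserves support} then places $\phi(X^{\sharp([n]\setminus S)})$ in $M_n^{\subseteq S}$, and the $\flat$-operation returns a matrix of the same rank at most $k$. Continuity is automatic as $\psi$ is a composition of continuous maps. Injectivity of $\psi$ then follows immediately by chaining three injective maps: $(\cdot)^{\sharp([n]\setminus S)}$, $\phi$, and $(\cdot)^{\flat([n]\setminus S)}|_{M_n^{\subseteq S}}$. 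Commutativity preservation is also straightforward -- $(\cdot)^{\sharp([n]\setminus S)}$ lifts a commuting pair to a commuting pair in $M_n$, commutativity-preservation of $\phi$ propagates this, and $(\cdot)^{\flat([n]\setminus S)}$ being an algebra homomorphism on $M_n^{\subseteq S}$ brings it back down.

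The only step that demands real thought is spectrum preservation, which I expect to be the main obstacle since our hypothesis on $\phi$ is merely spectrum \emph{shrinking}. My plan here is to first upgrade this by invoking Theorem \ref{th:sing}\ref{item:th:sing:inj.bis}, which applies to $\phi: M_n^{\le k}\to M_n^{\le k}$ (continuous, injective) and concludes that $\phi$ preserves characteristic polynomials. Then I would compare characteristic polynomials block-wise: up to a common permutation of basis vectors, $X^{\sharp([n]\setminus S)}$ is block-diagonal with blocks $X$ and $0_{n-|S|}$, so $k_{X^{\sharp([n]\setminus S)}}(x)=k_X(x)\cdot x^{n-|S|}$, and symmetrically $k_{\phi(X^{\sharp([n]\setminus S)})}(x) = k_{\psi(X)}(x)\cdot x^{n-|S|}$ because $\phi(X^{\sharp([n]\setminus S)})\in M_n^{\subseteq S}$ is the block embedding of $\psi(X)$. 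Equating these and cancelling $x^{n-|S|}$ yields $k_{\psi(X)}=k_X$, so in particular $\mathrm{sp}(\psi(X))=\mathrm{sp}(X)$.
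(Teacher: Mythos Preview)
Your proposal is correct and is precisely the direct verification the paper alludes to (the paper's own proof is a one-line reference to \cite[Claim~11]{MR4881574}). Your key observations---that $(\cdot)^{\sharp([n]\setminus S)}$ and $(\cdot)^{\flat([n]\setminus S)}|_{M_n^{\subseteq S}}$ are mutually inverse algebra isomorphisms, that Claim~\ref{cl:phi preserves support} keeps $\phi$-images inside $M_n^{\subseteq S}$, and that characteristic-polynomial preservation (already established at the start of the proof via Theorem~\ref{th:sing}\ref{item:th:sing:inj.bis}) allows the clean factorization $k_{\psi(X)}(x)\cdot x^{n-|S|}=k_X(x)\cdot x^{n-|S|}$---are exactly what is needed, and your argument for spectrum preservation via characteristic polynomials neatly sidesteps any ambiguity about the eigenvalue $0$ that a spectrum-only argument would have to handle.
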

\begin{claimproof}
  This is easy to verify directly (see the proof of \cite[Claim~11]{MR4881574}).
\end{claimproof}

Arguments similar to ones immediately after \cite[Claim 11]{MR4881574} show that
\begin{equation*}
g(i,j)g(j,k) = g(i,k), \quad \text{ for all } (i,j),(j,k) \in [n]^2.
\end{equation*}
Now it is easy to see that for \begin{equation*}
D := \diag(g(1,1), \ldots, g(1,n)) \in \mathcal{D}_n \cap \GL(n) 
\end{equation*} 
we have
\begin{equation*}\phi(E_{ij}) = D^{-1}E_{ij}D, \quad \text{ for all } (i,j) \in [n]^2\end{equation*}
so by passing to the map $D\phi(\cdot)D^{-1}$ without loss of generality we will assume that
\begin{equation*}\phi(E_{ij}) = E_{ij}, \quad \text{ for all } (i,j) \in [n]^2.\end{equation*}

\begin{claim}\label{cl:identity on R}
  $\phi$ acts as the identity on all rank-one non-nilpotents, i.e.\ $(M_{n}^{= 1})'_{s}$.
\end{claim}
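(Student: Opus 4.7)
The plan is first to use Claim~\ref{cl:restriction of phi} and Claim~\ref{cl:n=3} to show that $\phi$ acts as the identity on every matrix supported in a fixed $3$-element subset of $[n]$, and then to combine this with Claim~\ref{cl:preserves orthogonality} to pin down $\phi(A)$ for every rank-one non-nilpotent $A$ via sufficiently many orthogonality relations.

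For the reduction, fix $S \subseteq [n]$ with $|S| = 3$. By Claim~\ref{cl:restriction of phi}, the associated $\psi : M_3^{\le k} \to M_3^{\le k}$ is an injective continuous commutativity and spectrum preserver, so Claim~\ref{cl:n=3} gives that $\psi = T(\cdot) T^{-1}$ or $\psi = T(\cdot)^t T^{-1}$ for some $T \in \GL(3)$. The inherited normalization $\psi(E_{ij}) = E_{ij}$ for all $(i,j) \in [3]^2$ rules out the transpose option (no invertible $T$ satisfies $T E_{ji} T^{-1} = E_{ij}$ for all $i,j$) and forces $T$ to be scalar; hence $\psi$ is the identity on $M_3^{\le k}$. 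In view of Claim~\ref{cl:phi preserves support}, $\phi(X) = X$ whenever $X \in M_n^{\le k}$ is supported in $S^2$ for some $3$-element subset $S \subseteq [n]$.

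Now fix $A \in (M_n^{=1})'_s$. By homogeneity (Claim~\ref{cl:phi is homogeneous}) it suffices to treat the case when $A$ is a rank-one idempotent, so write $A = uv^t$ with $v^t u = 1$. By Claim~\ref{cl:preserves rank-one} and spectrum preservation, $\phi(A) = u'(v')^t$ with $(v')^t u' = 1$; the goal is $u' \in \spn(u)$ and $v' \in \spn(v)$. Fix any $l \in \supp v$ and, for each $i \in [n] \setminus \{l\}$, set $x_i := v_l e_i - v_i e_l$, a nonzero element of $\ker A$, the collection $\{x_i\}_{i \ne l}$ spanning $\ker A$. Pick $q_i \in [n] \setminus \{i,l\}$ (possible since $n \ge 3$) and any nonzero $y_i \in \ker A^t$ supported in $\{i,l,q_i\}$ (existing because the single defining linear constraint cuts out at least a $2$-dimensional subspace of the corresponding $3$-dimensional coordinate subspace). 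Then $B_i := x_i y_i^t$ lies in $M_n^{\le 1}$, is supported in $\{i,l,q_i\}^2$, and satisfies $A \perp B_i$; so $\phi(B_i) = B_i$ by the previous paragraph, and Claim~\ref{cl:preserves orthogonality} yields $\phi(A) \perp B_i$. Expanding $\phi(A) B_i = 0$ gives $\bigl((v')^t x_i\bigr)\, u' y_i^t = 0$, forcing $(v')^t x_i = 0$; since $\{x_i\}$ spans $\ker A$, we conclude $v' \in (\ker A)^\perp = \spn(v)$.

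A completely symmetric argument, using a spanning family of $\ker A^t$ of the form $u_l e_i - u_i e_l$ together with companion $x$'s giving $3$-element support, and the equation $B_i\phi(A) = 0$ rewritten as $(y_i^t u')\, x_i (v')^t = 0$, yields $u' \in \spn(u)$. The trace condition $(v')^t u' = 1 = v^t u$ then forces the pair of scalars to multiply to $1$, so $\phi(A) = uv^t = A$. The main technical point is the joint construction of $x_i$ and $y_i$ producing a nonzero rank-one $B_i$ with support in a $3$-element subset; this rests on $n \ge 3$ and the nonvanishing of $u$ and $v$.
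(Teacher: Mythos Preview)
Your proof is correct and follows essentially the same strategy as the paper's: first use Claims~\ref{cl:restriction of phi} and~\ref{cl:n=3} to conclude that $\phi$ is the identity on matrices with $3$-element support, then exploit orthogonality (Claim~\ref{cl:preserves orthogonality}) against suitable auxiliary rank-one matrices to force $\phi(A)\parallel A$. The paper streamlines the second step by choosing the pivot index $j\in(\supp a)\cap(\supp b)$ (nonempty since $\Tr(ab^*)\ne 0$), so that a \emph{single} family of auxiliary matrices $(\overline{b_j}e_i-\overline{b_i}e_j)(\overline{a_i}e_j-\overline{a_j}e_i)^*$, supported already on the $2$-element set $\{i,j\}$, simultaneously pins down both factors of $\phi(ab^*)$; your two-pass version also works, provided the ``symmetric'' half is run with a fresh pivot in $\supp u$ rather than the original $l\in\supp v$ (your notation $u_l e_i-u_i e_l$ reusing the same $l$ obscures this, since those vectors fail to span $\ker A^t$ when $u_l=0$).
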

\begin{claimproof}
  Fix some $ab^* \in (M_{n}^{=1})'_{s}$ for some non-zero vectors $a,b \in \C^n$. In view of Claim \ref{cl:preserves rank-one}, denote $\phi(ab^*) = xy^*$ for some non-zero $x,y \in \C^n$. Since $b^*a = \Tr (ab^*) \ne 0$, we can choose some $j \in (\supp a) \cap (\supp b)$.   
  Fix some distinct $i \in [n] \setminus \{j\}$ and consider
  \begin{equation*}
    A := (\overline{b_j}e_i - \overline{b_i}e_j)(\overline{a_i}e_j - \overline{a_j}e_i)^* \in M_n^{\le 1}.
  \end{equation*}
  Clearly, $\supp A \subseteq \{i,j\}\times\{i,j\}.$ Since $n \ge 3$, choose some $k \in [n]\setminus \{i,j\}$. We can now invoke Claim \ref{cl:restriction of phi} and the $n=3$ case (Claim \ref{cl:n=3}) to conclude that the map
  \begin{equation*}\psi : (M_n^{\le k})^{\flat([n] \setminus \{i,j,k\})} = M_3^{\le k} \to M_3^{\le k}, \qquad X \mapsto \phi(X^{\sharp ([n] \setminus \{i,j,k\})})^{\flat([n] \setminus \{i,j,k\})}\end{equation*}
  is of the form \eqref{eq:inner}. Since $\psi$ acts as the identity on all matrix units of $M_3^{\le k}$, we conclude that $\psi$ is the identity map. In particular, Claim \ref{cl:phi preserves support} implies that $\phi(A) = A$.

  \smallskip
  
  Now notice that $ab^* \perp A$, so by Claim \ref{cl:preserves orthogonality} we obtain
  \begin{equation*}xy^* = \phi(ab^*) \perp \phi(A) = A \implies (\overline{a_i}e_j - \overline{a_j}e_i)^*x = y^*(\overline{b_j}e_i - \overline{b_i}e_j) = 0.\end{equation*}
  Overall, it follows
  \begin{equation*}x \perp \ca{B}_1:=\{\overline{a_i}e_j - \overline{a_j}e_i \ : \ i \in [n]\setminus \{j\}\} \quad \text{ and } \quad y \perp \ca{B}_2:=\{\overline{b_j}e_i - \overline{b_i}e_j \ : \ i \in [n]\setminus \{j\}\}.\end{equation*}
In fact, the sets $\ca{B}_1$ and $\ca{B}_2$ are bases for $\{a\}^\perp$ and $\{b\}^\perp$, respectively. We conclude $x\parallel a$ and $y \parallel b$, which implies $\phi(ab^*) \parallel ab^*$. Equating the traces yields $\phi(ab^*) = ab^*$.
\end{claimproof}

\begin{claim}\label{cl:penultimate step}
  $\phi$ is the identity map.
\end{claim}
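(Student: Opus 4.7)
My plan is to use the two previous claims—Claim \ref{cl:preserves-diagonalizability}, which provides for every $S\in\GL(n)$ some $T\in\GL(n)$ with $\phi(SDS^{-1})=TDT^{-1}$ on $\mathcal{D}_n^{\le k}$, together with Claim \ref{cl:identity on R}, which already pins $\phi$ down as the identity on rank-one non-nilpotents—to force $T$ to agree with $S$ up to diagonal rescaling. That pinning would give $\phi = \mathrm{id}$ on all diagonalizable matrices in $M_n^{\le k}$, and the full statement would then follow by density and continuity.

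Concretely, I would first apply the relation $\phi(SE_{jj}S^{-1})=TE_{jj}T^{-1}$ for each $j\in[n]$, noting $E_{jj}\in \mathcal{D}_n^{\le 1}\subseteq\mathcal{D}_n^{\le k}$. Each matrix $SE_{jj}S^{-1}$ is a rank-one idempotent with trace $1$, hence lies in $(M_n^{=1})'_s$, so by Claim \ref{cl:identity on R} the left-hand side equals $SE_{jj}S^{-1}$. Thus $TE_{jj}T^{-1}=SE_{jj}S^{-1}$ for every $j$, i.e.\ $T^{-1}S$ commutes with all the rank-one diagonal units $E_{jj}$; since the joint centralizer of $\{E_{jj}:j\in[n]\}$ in $M_n$ is precisely $\mathcal{D}_n$, we obtain $T^{-1}S\in\mathcal{D}_n\cap\GL(n)$. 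Writing $T=SD'$ with $D'\in\mathcal{D}_n\cap\GL(n)$ and using that $D'$ commutes with every $D\in\mathcal{D}_n$, we get
\begin{equation*}
\phi(SDS^{-1})=TDT^{-1}=SD'DD'^{-1}S^{-1}=SDS^{-1},\qquad D\in\mathcal{D}_n^{\le k}.
\end{equation*}
Since every diagonalizable matrix in $M_n^{\le k}$ has the form $SDS^{-1}$ with $S\in\GL(n)$ and $D\in\mathcal{D}_n^{\le k}$, this shows $\phi$ fixes every such matrix.

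To close the argument I would invoke density: Lemma \ref{le:Rk density argument}(a) shows that the diagonalizable rank-$k$ matrices with pairwise distinct non-zero eigenvalues are dense in $M_n^{=k}$, and $M_n^{=k}$ in turn is dense in $M_n^{\le k}$ (any rank-$r$ matrix with $r<k$ can be perturbed to rank $k$ by adding a small matrix supported on a complement of its image). Continuity of $\phi$ then promotes the identity action from the diagonalizable locus to all of $M_n^{\le k}$. I do not foresee any serious obstacle here: the only conceivable subtlety is the ambiguity in the choice of $T$ furnished by Claim \ref{cl:preserves-diagonalizability}, but the matrix units $E_{jj}$ have $\mathcal{D}_n$ as their full centralizer, which is exactly what is needed to absorb that ambiguity into diagonal rescaling factors that act trivially by conjugation on $\mathcal{D}_n$.
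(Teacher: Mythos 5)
Your proof is correct and follows essentially the same route as the paper: apply Claim~\ref{cl:preserves-diagonalizability} to get $\phi(SDS^{-1})=TDT^{-1}$, use Claim~\ref{cl:identity on R} on the rank-one idempotents $SE_{jj}S^{-1}$ to force $TE_{jj}T^{-1}=SE_{jj}S^{-1}$, deduce $TDT^{-1}=SDS^{-1}$ for all diagonal $D$, and conclude by density and continuity. The only cosmetic difference is that the paper deduces $TDT^{-1}=SDS^{-1}$ directly by linearity of conjugation in $D$, whereas you first show $T^{-1}S$ is diagonal; these are equivalent.
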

\begin{claimproof}
  The proof is similar to that of \cite[Claim 15]{MR4881574}. More specifically, by Claim \ref{cl:preserves-diagonalizability}, for each $S \in \GL(n)$ there exists $T \in \GL(n)$ such that
  \begin{equation*}\phi(SDS^{-1}) = TDT^{-1}, \quad \text{ for all } D \in \ca{D}_n^{\le k}.\end{equation*}
  In particular, for all $j \in [n]$ we have
  \begin{equation*}
    TE_{jj}T^{-1} = \phi(\underbracket{SE_{jj}S^{-1}}_{\in M_n^{\le 1}}) \stackrel{\text{Claim } \ref{cl:identity on R}}= SE_{jj}S^{-1}.
  \end{equation*} Hence, by the linearity of the maps $T(\cdot)T^{-1}$ and $S(\cdot)S^{-1}$, for all $D \in \ca{D}_n^{\le k}$ we have
  \begin{equation*}
    \phi(SDS^{-1}) = TDT^{-1} = SDS^{-1}.
  \end{equation*}
  The Claim now follows from Lemma \ref{le:Rk density argument} and the continuity of $\phi$.
\end{claimproof}
\end{proof}
We conclude the paper with a brief discussion on the necessity of assumptions in Theorem \ref{th:sing2sing}.
\begin{remark}\label{re:indispensable}
  \begin{enumerate}[wide]
  \item The assertion of Theorem \ref{th:sing2sing} does not hold for $n=2$. This follows from \cite[Proof of Case 1 in (ii) $\implies$ (i) of Theorem~3.7]{MR4881574}, which is a slight modification of \cite[Example 7]{PetekSemrl} (in order to ensure the injectivity). More specifically, let $f : [0,+\infty) \to \Sph^1$  be a  function defined by $f(t):= e^{\frac{i\pi}{t+1}}$. Define $\phi : M_2^{\le 1} \to M_{2}^{\le 1}$  by
    \begin{equation*}
      \phi\left(\begin{bmatrix}
        a & b \\ c & d
      \end{bmatrix}\right)
    :=
    \left\{\begin{array}{cc}
      \begin{bmatrix}
        a & 0 \\ c & d
      \end{bmatrix}, & \text{ if } b= 0, \\
      \begin{bmatrix}
        a & b\,f\left(\abs{\frac{c}b}\right) \\ c\,\overline{f\left(\abs{\frac{c}b}\right)} & d
      \end{bmatrix}, & \text{ otherwise.}
    \end{array}\right.
\end{equation*}
Then $\phi$ is an injective continuous spectrum and commutativity preserver. On the other hand, 
\begin{equation*}
  \phi\left(\begin{bmatrix}
    1 & 1 \\ 1 & 1
  \end{bmatrix}\right)=
\begin{bmatrix}
  1 & i \\ -i & 1
\end{bmatrix}
\quad\text{ and } \quad
\phi\left(\begin{bmatrix}
  1 & 1 \\ 0 & 0
\end{bmatrix}\right)
+ \phi\left(\begin{bmatrix}
  0 & 0 \\ 1 & 1
\end{bmatrix}\right)
=\begin{bmatrix}
  1 & -1 \\ 1 & 1
\end{bmatrix},
\end{equation*}
which clearly shows that $\phi$ is not of the form \eqref{eq:inner}.
\end{enumerate}
\smallskip
Now let $n \ge 3$ and $1 \le k < n$.
\begin{enumerate}[wide] \setcounter{enumi}{1}
\item The map $\phi : M_{n}^{\le k} \to M_{n}^{\le k}$ given by $\phi(X) = 2X$ is commutativity-preserving, injective and continuous, but not spectrum-shrinking.
\item Following \cite[Section~4]{Semrl} (see also \cite[Remark~4.3]{MR4881574}), the map $\phi : M_{n}^{\le k} \to M_{n}^{\le k}$ given by $\phi(X) = f(X)Xf(X)^{-1}$, for some continuous map $f : M_n^{\le k} \to \GL(n)$ is injective, continuous and spectrum-preserving, but not commutativity-preserving in general.
\item The map $\phi : M_{n}^{\le k} \to M_{n}^{\le k}$ given by 
  \begin{equation*}
    \phi(X) = \begin{cases}
      -X, \qquad &\text{ if }X \text{ is nilpotent} ,\\
      X, &\text{ otherwise}
    \end{cases}
  \end{equation*}
  is injective and preserves spectra and commutativity, but is not continuous.
\item The map $\phi: M_n^{\leq k} \to M_n^{\le k}$ that sends every matrix to a fixed rank-one nilpotent matrix is clearly continuous, commutativity-preserving and spectrum-shrinking, but not injective.
\end{enumerate}
\end{remark}



\def\polhk#1{\setbox0=\hbox{#1}{\ooalign{\hidewidth
  \lower1.5ex\hbox{`}\hidewidth\crcr\unhbox0}}}

\end{document}